\renewcommand{\leq}{\le}
\renewcommand{\geq}{\ge}
\newcommand{\e}{\text{e}}
\newcommand{\I}{\mathds 1}
\def\d{{\rm d}}
\def\<{\langle}
\def\>{\rangle}
\newtheorem{theorem}{Theorem}[section]
\newtheorem{lemma}[theorem]{Lemma}
\newtheorem{proposition}[theorem]{Proposition}
\newtheorem{corollary}[theorem]{Corollary}
\numberwithin{equation}{section}
\theoremstyle{definition}
\newtheorem{definition}[theorem]{Definition}
\newtheorem{remark}[theorem]{Remark}
\begin{document}
\allowdisplaybreaks
\title[Spectral dimensions for critical long-range percolation]
{\bfseries  Spectral dimensions for one-dimensional critical long-range percolation}

\author{Zherui Fan  \qquad  Lu-Jing Huang}

\thanks{\emph{Z. Fan:}
School of Mathematical Sciences, Peking University, Beijing, China.
  \texttt{1900010670@pku.edu.cn}}

\thanks{\emph{L.-J. Huang:}
School of Mathematics and Statistics \& Key Laboratory of Analytical Mathematics and Applications (Ministry of Education), Fujian Normal University, Fuzhou, China.
  \texttt{huanglj@fjnu.edu.cn}}


\date{}
\maketitle

\begin{abstract}

Consider the critical long-range percolation on $\mathds{Z}$, where an edge connects $i$ and $j$ independently with probability $1-\exp\{-\beta\int_i^{i+1}\int_j^{j+1}|u-v|^{-2}\d u\d v\}$ for $|i-j|>1$ for some fixed $\beta>0$ and with probability 1 for $|i-j|=1$.
We prove that both the quenched and annealed spectral dimensions of the associated simple random walk are  $2/(1+\delta)$, where $\delta\in (0,1)$ is the exponent of the effective resistance in the LRP model, as derived in  \cite[Theorem 1.1]{DFH25+}. Our work addresses an open question from \cite[Section 5]{CCK22}.

\noindent \textbf{Keywords:} Long-range percolation, random walk, spectral dimension.

\medskip

\noindent \textbf{MSC 2020:} 60K35, 82B27, 82B43

\end{abstract}
\allowdisplaybreaks


\section{Introduction}

The study of random walks on percolation clusters in the integer lattice $\mathds{Z}^d$ has attracted significant interest over the years (see e.g.\ \cite{Bar04,MFGW89,SS04}).
In particular, random walks on long-range percolation (LRP) models have emerged as a notable area of research due to their characteristic ``long-range jumps'' and the complex phase transition phenomena they exhibit.

In this paper, we will concentrate on random walks in the one-dimensional critical long-range percolation (LRP) on $\mathds{Z}$.
Specifically, we consider LRP on $\mathds{Z}$, where edges $\langle i,j\rangle $ with $|i-j|=1$ (i.e.\ $i$ and $j$ are nearest neighbors) are present, while edges $\langle i,j\rangle $ with $|i-j|>1$ (which we refer to as long edges in what follows) occur independently with probability
\begin{equation}\label{def-LRP}
1-\exp\left\{-\beta\int_i^{i+1}\int_j^{j+1}\frac{1}{|u-v|^2}\d u\d v\right\}.
\end{equation}
Here, $\beta> 0$ is a parameter of this LRP model, which we refer to as the $\beta$-LRP model, and
denote by $\mathcal{E}$ the random edge set.
For convenience, we will also use $\sim$ to denote edges,
that is, $i\sim j$ implies $\langle i,j\rangle \in \mathcal{E}$.
Throughout the paper, we suppose this model is built on a probability space with probability measure $\mathds{P}$ and expectation $\mathds{E}$.

Next, let $X=(X_n)_{n\geq 0}$ be the discrete-time simple random walk on the $\beta$-LRP model.
For each $x\in \mathds{Z}$, we denote by $\mathbf{P}_x$ and $\mathbf{E}_x$ the law of $X$ starting from $x$ and the corresponding expectation, respectively.
The transition probabilities of $X$ are given by
\begin{equation*}\label{def-rw}
  \mathbf{P}[X_{n+1}=y\mid X_n=x]=\frac{1}{\text{deg}(x)}\quad \text{for all $n\geq 0$ and }y\sim x,
\end{equation*}
where $\text{deg}(x):=\#\{y\in \mathds{Z}:\ y\sim x\}$ is the degree of $x$ in the model.
It is worth emphasizing that $\mathds{P}$-a.s. we have $\text{deg}(x)<\infty$ for all $x\in \mathds{Z}$ from \eqref{def-LRP}. Hence, the associated random walk is well defined. We then define the transition density (also known as the discrete-time heat kernel) of $X$ by
\begin{equation*}
p_n(x,y)=\frac{1}{\text{deg}(y)}\mathbf{P}_x[X_n=y]\quad \text{for all $n\geq 1$ and all } x,y\in \mathds{Z}.
\end{equation*}
Additionally, we define the associated quenched spectral dimension as
\begin{equation*}
  d_s^{(q)}(\beta)=-2\lim_{n\rightarrow \infty}\frac{\log p_{2n}(0,0)}{\log n}
\end{equation*}
and the annealed spectral dimension as
\begin{equation*}
  d_s^{(a)}(\beta)=-2\lim_{n\to\infty}\frac{\log\mathds{E}[p_{2n} (0,0)]}{\log n},
\end{equation*}
if the limits exist.

Our main result establishes the estimates for the heat kernels and spectral dimensions of the $\beta$-LRP model, as detailed in the following theorem. Let $\delta \in (0,1)$ be the exponent of the effective resistance in the $\beta$-LRP model defined in \cite[Theorem 1.1]{DFH25+}, see Lemma \ref{Lam-order} below for more details.

\begin{theorem}\label{thm-mr}
For all $\beta>0$, the $\beta$-LRP satisfies the following properties.
\begin{itemize}
\item[\rm(1)] There exist constants $0<c_1,c_2,\gamma_1<\infty$ {\rm(}depending only on $\beta${\rm)} such that $\mathds{P}$-a.s., for all $n\in \mathds{N}$ large enough,
    \begin{equation*}
      c_1n^{-\frac{1}{1+\delta}}(\log n)^{-\gamma_1}\leq p_{2n}(x,x)\leq c_2n^{-\frac{1}{1+\delta}}(\log n)^{\gamma_1}.
    \end{equation*}

\item[\rm(2)]There exist constants $0<c_3,c_4<\infty$  {\rm(}depending only on $\beta${\rm)}  such that for all $n\in \mathds{N}$,
    \begin{equation*}
      c_3n^{-\frac{1}{1+\delta}}\leq \mathds{E}\left[p_{2n}(x,x)\right]\leq c_4n^{-\frac{1}{1+\delta}}.
    \end{equation*}
\end{itemize}
In particular, we have that $\mathds{P}$-a.s. $d_s^{(q)}(\beta)=2/(1+\delta)$, and $d_s^{(a)}(\beta)=2/(1+\delta)$.
\end{theorem}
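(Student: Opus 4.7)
The plan is to derive Theorem \ref{thm-mr} from the now-standard framework that converts volume growth and effective-resistance estimates into on-diagonal heat-kernel bounds (as developed by Barlow, Bass, Kumagai and others). The heuristic is clean: for the Euclidean ball $B(x,r) = [x-r, x+r] \cap \mathds{Z}$, the reversible weight $\mu(B(x,r)) := \sum_{y \in B(x,r)} \deg(y)$ satisfies $\mu(B(x,r)) \asymp r$ (since $\mathds{E}[\deg(0)] < \infty$ by the summability in \eqref{def-LRP}), while Lemma \ref{Lam-order} gives $R_{\mathrm{eff}}(x, B(x,r)^c) \asymp r^\delta$. The commute-time identity then yields $\mathbf{E}_x[\tau_{B(x,r)^c}] \asymp \mu(B(x,r))\, R_{\mathrm{eff}}(x, B(x,r)^c) \asymp r^{1+\delta}$, so $r_n := n^{1/(1+\delta)}$ is the natural space-time scale and we predict $p_{2n}(x,x) \asymp 1/\mu(B(x, r_n)) \asymp n^{-1/(1+\delta)}$.

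For the quenched part (1), I would feed the quenched volume concentration (via Chebyshev using $\mathds{E}[\deg(0)] < \infty$, followed by Borel--Cantelli for a.s.\ uniformity) and the quenched resistance bounds from Lemma \ref{Lam-order} into two complementary arguments. The lower bound is obtained from the Cauchy--Schwarz inequality
\begin{equation*}
  \mathbf{P}_x(X_n \in B(x,r))^2 \leq \mu(B(x,r))\, p_{2n}(x,x),
\end{equation*}
applied with $r$ a polylog larger than $r_n$, so that Markov applied to the commute-time upper bound on $\mathbf{E}_x[\tau_{B(x,r)^c}]$ gives $\mathbf{P}_x(X_n \in B(x,r)) \geq 1/2$. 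The upper bound uses the monotonicity of $n \mapsto p_{2n}(x,x)$ together with a Nash-type inequality (equivalently, a spectral-gap lower bound $\lambda_1^B \gtrsim 1/(\mu(B)\,R_{\mathrm{eff}})$ on the ball $B=B(x,r_n)$) to conclude $p_{2n}(x,x) \lesssim 1/\mu(B(x, r_n))$. The polylogarithmic factor $(\log n)^{\gamma_1}$ absorbs the $\mathds{P}$-a.s.\ uniform fluctuations of both $\mu(B)$ and $R_{\mathrm{eff}}$ over the polynomial range of scales needed as $n$ varies.

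For the annealed part (2), the lower bound follows by restricting the above Cauchy--Schwarz estimate to a ``good'' event $G_n$ on which $\mu(B(x, r_n)) \leq C r_n$ and the exit time from $B(x, r_n)$ exceeds $n$ with probability $\geq 1/2$; if the annealed moment bounds from Lemma \ref{Lam-order} give $\mathds{P}(G_n) \geq 1/2$ uniformly in $n$, this yields $\mathds{E}[p_{2n}(x,x)] \gtrsim n^{-1/(1+\delta)}$ directly. I expect the main obstacle to be the annealed upper bound, because naively integrating the quenched upper bound introduces a spurious $(\log n)^{\gamma_1}$ factor. The resolution is to carry through the Nash/commute-time argument while tracking its precise dependence on the (random) volume and resistance of $B(x, r_n)$, and then use the annealed first-moment bound $\mathds{E}[R_{\mathrm{eff}}(0, [-r,r]^c)] \asymp r^\delta$ from Lemma \ref{Lam-order} (together with independence-type control or a second-moment argument for the interaction between $\mu(B)$ and $R_{\mathrm{eff}}$) to integrate out the resistance fluctuations cleanly. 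The spectral-dimension statements follow immediately by taking logarithms of the two-sided bounds.
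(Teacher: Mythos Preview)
Your overall strategy is sound and is precisely the Kumagai--Misumi framework (Propositions~\ref{lem-td} and~\ref{thm-quenched}) that the paper invokes, but there is a substantive gap. Both the upper \emph{and} lower on-diagonal bounds in that framework require, in addition to $R(0,B_r(0)^c)\asymp\varphi(r)$, a uniform bound $R(0,y)\le\lambda\,\varphi(d(0,y))$ for \emph{every} $y$ in the ball (the fourth condition in $J(\lambda)$, Definition~\ref{def-Jlambda}). Your lower-bound step ``Markov on the commute-time upper bound gives $\mathbf{P}_x(X_n\in B)\ge 1/2$'' has the inequality the wrong way round: Markov on $\mathbf{E}_x[\tau]$ bounds $\mathbf{P}_x(\tau>n)$ from \emph{above}, not below. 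The standard route to $\mathbf{P}_x(\tau_r>n)\ge c$ compares $R(0,y)$ against $R(0,B_r^c)$ to force returns to $0$ before exit, and this needs the uniform bound. Likewise, the spectral-gap estimate you want is $\lambda_1^B\gtrsim 1/\bigl(\mu(B)\,R_{\mathrm{diam}}(B)\bigr)$ with the resistance \emph{diameter}, not $R(0,B^c)$. In your Euclidean setup the uniform condition reads $\max_{|y|\le r}R(0,y)\le\lambda r^\delta$; Lemma~\ref{Lam-order} only controls the \emph{expectation} $\Lambda(r)$, so you still need the high-moment/tail bound on $\max_y R(0,y)$ that is the paper's Proposition~\ref{max-2r-moment}---and that is where the real work lies.

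The paper differs from your plan in one clean way: it takes $d=R$ (the resistance metric itself) rather than the Euclidean metric, with $\varphi(r)=r$ and $\phi(r)=r^{1/\delta}$. Then $R(0,y)\le\lambda\,\varphi(d(0,y))$ is a tautology, and all the effort shifts to volume and boundary-resistance estimates for \emph{resistance} balls (Lemmas~\ref{low-V}--\ref{low-R}), which are again driven by Proposition~\ref{max-2r-moment}. This choice also tidies the annealed upper bound: since now $J(\lambda)=\widehat J(\lambda)$, Remark~\ref{remark}(1) reduces \eqref{an-upb} to the single moment condition $\mathds{E}[1/\mathds{V}_r(0)]\le C r^{-1/\delta}$ (Corollary~\ref{low-V-2}), bypassing the second-moment/independence argument you anticipate. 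Your Euclidean-ball route can be made to work (it is essentially what \cite{KM08} does for $s>2$), but you would end up proving Proposition~\ref{max-2r-moment} anyway and then verifying one extra condition that the paper's metric choice renders automatic.
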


\begin{remark}
(1) As we mentioned in the abstract, our main result establishes the diagonal decay of the heat kernel and the spectral dimensions for random walk in $\beta$-LRP model, thereby addressing Open Question $5$ from \cite[Section 5]{CCK22}.

  (2) It has become standard that many other random walk estimates can be proved using the same techniques in \cite{KM08}.
  For example, after $n$ steps, the random walk $X=(X_k)_{k\geq 0}$  will have a Euclidean distance from the origin that, with high probability, is $n^{1/(1+\delta)+o(1)}$.
Furthermore, the total number of distinct vertices visited by the random walk after $n$ steps is also $n^{1/(1+\delta)+o(1)}$. We omit the details.
\end{remark}

The main technique for proving Theorem \ref{thm-mr} is based on ideas from \cite{KM08}, specifically by establishing suitable upper and lower bound estimates for the volume and effective resistance in the model to derive properties of the associated random walk. Therefore, we will introduce some notations and present the key results from \cite{KM08} in Section \ref{sect-pre} as preparation. We will then present the proof of Theorem \ref{thm-mr} in Section \ref{sect-proof}.

\subsection{Related work}

The study of random walks on percolation clusters within $\mathds{Z}^d$ has become a significant area of interest in mathematical physics and probability.
Initial studies predominantly focused on the nearest-neighbor percolation model.
For instance, \cite{GKZ93} proved that the random walk on the infinite cluster is recurrent in the two-dimensional case and
transient when $d\geq 3$. Afterwards, \cite{MFGW89} first established the diffusion scaling limits under the annealed law for the random walk on the infinite cluster of the two-dimensional percolation.
Subsequent works, such as \cite{SS04} (for $d\geq 4$) and \cite{BB07,MP07} (for $d\geq 2$) have proven invariance principles  governing random walks on supercritical percolation clusters.
Additionally, \cite{MR04} and \cite{Bar04} provided quenched Gaussian heat kernel bounds for the infinite cluster.
When random walks exhibit strong recurrence, a comprehensive theoretical framework has been developed to derive diagonal heat kernel estimates.
These techniques have been crucial in understanding the spectral dimension, which characterizes the decay rate of the diagonal heat kernel, see e.g. \cite{BJKS08,KN09,Ku14}.

It is important to note that all the studies mentioned so far focus on nearest-neighbour percolation, which only considers edges with unit Euclidean length in $\mathds{Z}^d$.
A natural extension is to allow for the existence of longer-range edges, a model known as long-range percolation, which was introduced by \cite{S83,ZPL83}.
Specifically, consider a sequence $\{p_x\}_{x\in \mathds{Z}^d}$, where $p_x\in [0,1]$ and $p_x=p_{x'}$ for all $x,x'\in \mathds{Z}^d$ such that $|x|=|x'|$. Assume that
\begin{equation*}
  0<\beta:=\lim_{|x|\rightarrow \infty}p_x|x|^s<\infty
\end{equation*}
for some $s>0$. Then the LRP on $\mathds{Z}^d$ is defined by edges $\langle x,y\rangle$ occurring independently with probability $p_{x-y}$.

We now review progress on behavior for random walks on LRP models.
In \cite{B02}, it was shown that for $s\geq 2d$ with $d\in \{1,2\}$, the random walk is recurrent; in contrast, for $d<s<2d$ with $d\in \{1,2\}$, the walk is transient, as shown by a renormalization argument.
\cite{KM08} employed methods to estimate volumes and effective resistances from points to boxes, yielding the corresponding heat kernel estimates for the case where $d=1,\ s>2$ and $p_1=1$.
In \cite{CS12},  non-Gaussian upper bounds for the heat kernel were derived for the scenarios where  $s\in (d,d+2)$ for $d\geq 2$ and $s\in (1,2)$ for $d=1$.
Based on this, the authors further established that when $s\in (d,d+1)$, the random walk converges to an $\alpha$-stable process with $\alpha=s-d$.
Notably, in the case $d=1$ and $s>2$, \cite{CS12} established that the random walk converges to a Brownian motion.
These results were later extended in \cite{BT24} to the case of $d\geq 2$ and $s\in [d+1,d+2)$. For the case $d>1$ and $s>2d$, quenched invariance principle is also constructed  in \cite{BCKW21}.
More recently, \cite{CCK22} achieved both quenched and annealed bounds for the diagonal heat kernel, allowing them to determine the spectral dimension of the random walk, with the  exception of the case where $(d,s)=(1,2)$.
They also highlighted that the spectral dimension in the case of $d=1$ and $s=2$ exhibits discontinuity.
Specifically, the authors provided quenched and annealed upper bounds for the diagonal heat kernel in this scenario, although these estimates are insufficient to ascertain its spectral dimension.
Our contribution in this work establishes the existence of the spectral dimension for the one-dimensional $\beta$-LRP.

\bigskip

{\bf Notational conventions.}
We denote $\mathds{N}=\{1,2,3,\cdots\}$. For any $i,j\in \mathds{N}\cup \{0\}$ with $i<j$, we will define $[i,j]=\{i,i+1,\cdots, j\}$ and $[i,j)=\{i,i+1,\cdots, j-1\}$. When we refer to an interval $I$, it always mean $I\cap \mathds{Z}$.

For any graph $G=(V,E)$, any vertex set $I\in V$ and any edge set $H\subset E$, we denote $\#I$ and $\#H$ as the numbers of vertices and edges in $I$ and $H$, respectively.

\section{General setting and preparations}\label{sect-pre}

In this section, we primarily introduce some general notations and results, which mainly draw from \cite{KM08}. These will play a crucial role in the proof of Theorem \ref{thm-mr}.

Let $G=(V,E)$ be a random infinite undirected connected graph with  vertex set $V$ and edge set $E$, defined on a probability space $(\Omega, \mathcal{F}, \mathds{P})$.
We assume that $0\in V$, and that $G$ is locally finite, meaning that $\text{deg}(x):=\#\{y\in V:\ \langle y,x\rangle \in E\}<\infty$ for all $x\in V$.
For convenience, we will also use $\sim$ to denote edges, specifically, $x\sim y$ implies $\langle x,y\rangle\in E$.

Let $d(\cdot,\cdot)$ denote a metric on $G$.
Note that $d$ is not necessarily a graph distance; any metric on $G$ may be employed in this section.
For any $x\in V$ and $r\geq 0$, denote
\begin{equation}\label{def-ball}
B_r(x)=\{y:\ d(x,y)<r\}
\end{equation}
as the ball centered at $x$ with radius $r$ with respect to the metric $d$. Define the volume of the ball $B_r(x)$ as
\begin{equation*}
\mathds{V}_r(x)=\sum_{y\in B_r(x)}\text{deg}(y).
\end{equation*}

Next, let $X=(X_n)_{n\geq 0}$ be the discrete-time simple random walk on $G$.
We denote by $\mathbf{P}_x$ and $\mathbf{E}_x$ the law of $X$  starting from $x$ and the corresponding expectation, respectively. Thus, the transition probabilities of $X$ are given by
\begin{equation*}
\mathbf{P}_x[X_1=y]=\frac{1}{\text{deg}(x)} \quad \text{for all } y\sim x.
\end{equation*}
We then define the transition density (also known as the discrete-time heat kernel) of $X$ by
\begin{equation*}
p_n(x,y)=\frac{1}{\text{deg}(y)}\mathbf{P}_x[X_n=y]\quad \text{for all $n\geq 1$ and }x,y\in V.
\end{equation*}
It can be verified that $p_n(x,y)=p_n(y,x)$.

For a subset $A\subset V$, we define the first hitting time as
\begin{equation*}
T_A=\inf\{n\geq 0:\ X_n\in A\},
\end{equation*}
and the first exit time as $\tau_A=T_{A^c}$. For simplicity, we write
\begin{equation*}
\tau_r=\tau_{B_r(0)}=\min\{n\geq 0:\ X_n\notin B_r(0)\}.
\end{equation*}
We also define a quadratic form $(\mathcal{D}, \mathds{H})$ as follows:
\begin{align*}
\mathds{H}&=\left\{f:\ V\rightarrow \mathds{R}\ \big|\ \frac{1}{2}\sum_{x\sim y}(f(x)-f(y))^2<\infty\right\} \\
\mathcal{D}(f,g)&= \frac{1}{2}\sum_{x\sim y}(f(x)-f(y))(g(x)-g(y))\quad \text{for }f,g\in \mathds{H}.
\end{align*}
For any two disjoint subsets $A,B\subset V$, we define the effective resistance between $A$ and $B$ as
\begin{equation*}\label{def-er}
1/R(A,B)=\inf\left\{\mathcal{D}(f,f):\ f\in \mathds{H},\ f|_A=1\ \text{and }f|_B=0 \right\}.
\end{equation*}
In particular, when $A=\{x\}$ is a singleton, we write $R(x, B)$ for $R(\{x\},B)$.
It is worth emphasizing that the effective resistance $R(\cdot,\cdot)$ defines a metric on the graph $G$, see \cite[Section 2.3]{Ki01} for more details.

Let $\phi,\varphi:\mathds{N}\rightarrow [0,\infty)$ be strictly increasing functions with $\phi(1)=\varphi(1)=1$ which satisfy
\begin{equation}\label{cond-phi}
C_1^{-1}\left(\frac{r}{r'}\right)^{d_1}\leq \frac{\phi(r)}{\phi(r')}\leq C_1\left(\frac{r}{r'}\right)^{d_2},\quad
C_2^{-1}\left(\frac{r}{r'}\right)^{\alpha_1}\leq \frac{\varphi(r)}{\varphi(r')}\leq C_2\left(\frac{r}{r'}\right)^{\alpha_2}
\end{equation}
for all $0<r'\leq r<\infty$, where $C_1,C_2\geq 1$, $1\leq d_1\leq d_2$ and $0<\alpha_1\leq \alpha_2\leq 1$. For convenience, we set $\phi(0)=\varphi(0)=0$, $\phi(\infty)=\varphi(\infty)=\infty$, extending them to $\phi,\varphi: [0,\infty]\rightarrow [0,\infty]$ such that $\phi$ and $\varphi$ are continuous, strictly increasing functions. Let $\Psi$ denote the inverse function of $(\phi\varphi)(\cdot)$.

It is well-known that the volume and the effective resistance are key ingredients in studying the properties of the simple random walk on the graph $G$. To facilitate this analysis, we introduce the following definition of a random set of radius of balls that exhibit ``good'' volume and effective resistance estimates.

\begin{definition}\label{def-Jlambda}
For $\lambda>1$, define
\begin{equation*}
\begin{aligned}
  J(\lambda)&=\left\{r\in [1,\infty]:\ \lambda^{-1}\phi(r)\leq \mathds{V}_r(0)\leq \lambda \phi(r),\ R(0,B_r(0)^c)\geq \lambda^{-1}\varphi(r), \right.\\
  &\quad \quad \quad \quad \quad \quad \quad R(0,y)\leq \lambda \varphi(d(0,y))\text{ for all }y\in B_r(0)\big\}.
  \end{aligned}
  \end{equation*}
\end{definition}

We now give the following {\bf Assumption A} concerning the graph $G$.
\begin{itemize}
\item[(A1)] There exist $\lambda_0>1$ and $p(\lambda)$ which goes to 0 as $\lambda\rightarrow \infty$ such that
\begin{equation*}
\mathds{P}[r\in J(\lambda)]\geq 1-p(\lambda)\quad \text{for each }r\geq 1,\ \lambda \geq \lambda_0.
\end{equation*}

\item[(A2)] There exists a constant $C_{*,1}<\infty$ such that for all $r\geq 1$,
$$
\mathds{E}\left[R(0,B_r(0)^c)\mathds{V}_r(0)\right]\leq C_{*,1}\phi(r)\varphi(r).
$$

\item[(A3)] There exist constants $q_0>0$ and $C_{*,2}<\infty$ such that for all $\lambda>1$,
\begin{equation*}
p(\lambda)\leq \frac{C_{*,2}}{\lambda^{q_0}}.
\end{equation*}
\end{itemize}

The following result is from \cite{KM08}, which provides the estimate for the diagonal transition density of the random walk.

\begin{proposition}[{\cite[Proposition 1.4]{KM08}}]\label{lem-td}
Suppose that {\rm(A1)} and {\rm(A2)} in {\bf Assumption A} hold. Then there exist constants $0<c_1,c_2,c_3, c_4<\infty$ such that the following holds.
\begin{itemize}
\item[\rm(1)] For all $r\geq 1$,
\begin{equation*}
c_1\phi(r)\varphi(r)\leq \mathds{E}\left[\mathbf{E}_0[\tau_r]\right]\leq c_2\phi(r)\varphi(r).
\end{equation*}

\item[\rm(2)] For all $n\geq 1$,
\begin{equation}\label{an-lowb}
\frac{c_3}{\phi(\Psi(n))}\leq \mathds{E}\left[p_{2n}(0,0)\right].
\end{equation}

\item[\rm(3)] For all $n\geq 1$,
\begin{equation*}
  c_4\Psi(n)\leq \mathds{E}\left[\mathbf{E}_0\left[d(0,X_n)\right]\right].
\end{equation*}
\end{itemize}

Assume in addition that there exist $c_5>0$, $\lambda'_0>1$ and $q'_0>2$ such that for all $r\geq 1$ and $\lambda\geq \lambda'_0$,
\begin{equation}\label{cond-VR-uplower}
  \mathds{P}\left[\lambda^{-1}\phi(r)\leq \mathds{V}_r(0),\ R(0,y)\leq \lambda \varphi(d(0,y))\ \text{for all }y\in B_r(0) \right]
  \geq 1-\frac{c_5}{\lambda^{q'_0}}.
\end{equation}
Then there exists a constant $c_6>0$ such that for all $n\geq 1$,
\begin{equation}\label{an-upb}
  \mathds{E}\left[p_{2n}(0,0)\right]\leq \frac{c_6}{\phi(\Psi(n))}.
\end{equation}
\end{proposition}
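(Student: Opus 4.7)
The plan is to handle the four assertions separately, with three recurring ingredients: (i) the electrical-network inequality $\mathbf{E}_x[\tau_r] \le R(x, B_r(x)^c) \cdot \mathds{V}_r(x)$ together with a variational lower-bound counterpart, (ii) the reversibility decomposition $p_{2n}(0,0) = \sum_y p_n(0,y)^2 \text{deg}(y)$ combined with Cauchy--Schwarz, and (iii) integration in $\lambda$ against the good event $\{r \in J(\lambda)\}$ controlled by (A1) and (A3).

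For part~(1), the upper bound is immediate from (A2): $\mathds{E}[\mathbf{E}_0[\tau_r]] \le \mathds{E}[R(0,B_r(0)^c)\mathds{V}_r(0)] \le C_{*,1}\phi(r)\varphi(r)$. The matching lower bound restricts to $\{r \in J(\lambda)\}$ for a fixed large $\lambda$: using the Green-function representation $\mathbf{E}_0[\tau_r] = \sum_y G_{B_r(0)}(0,y)\,\text{deg}(y)$ together with the standard identities relating $G_{B_r(0)}(\cdot,\cdot)$ to effective resistances, a direct computation using the volume and resistance bounds on $J(\lambda)$ yields $\mathbf{E}_0[\tau_r] \ge c\lambda^{-a}\phi(r)\varphi(r)$ for some exponent $a$ depending only on the constants in \eqref{cond-phi}, and (A1) guarantees that this event has uniform positive probability.

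For part~(2), Cauchy--Schwarz applied to the reversibility decomposition yields
\[
p_{2n}(0,0) \ge \frac{\mathbf{P}_0[\tau_r > n]^2}{\mathds{V}_r(0)}.
\]
Choosing $r = \Psi(Kn)$ for large $K$ makes $\phi(r)\varphi(r) = Kn$; on $\{r \in J(\lambda)\}$, the Markov-type inequality $\mathbf{E}_0[\tau_r] \le n + \mathbf{P}_0[\tau_r > n]\sup_{y \in B_r(0)} \mathbf{E}_y[\tau_r]$ combined with~(1) at interior vertices forces $\mathbf{P}_0[\tau_r > n] \ge c$, while $\mathds{V}_r(0) \le \lambda\phi(r)$; restricting to this event and taking expectation produces~\eqref{an-lowb}. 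Part~(3) follows similarly: with $r \asymp \Psi(n)$ chosen so the expected exit time has order $n$, the good event yields $\mathbf{P}_0[\tau_r \le n] \ge 1/2$, and a standard reversibility argument preventing the walk from returning too close to the origin transfers this to the displacement lower bound.

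For the annealed upper bound~\eqref{an-upb} under the supplementary hypothesis~\eqref{cond-VR-uplower}, I would first establish a quenched estimate of the form $p_{2n}(0,0) \le C\lambda^b/\phi(\Psi(n))$ valid on the event in~\eqref{cond-VR-uplower}, using a Nash/spectral inequality together with the volume lower bound and the resistance upper bound. A layer-cake decomposition in $\lambda$ against the polynomial tail $p(\lambda) \le c_5 \lambda^{-q'_0}$, with the trivial $p_{2n}(0,0) \le 1$ on the complement, then yields the desired $c_6/\phi(\Psi(n))$. The main obstacle I expect is precisely this $\lambda$-integration: the interplay between the exponent $b$ in the quenched bound and the decay rate $q'_0$ is what forces the threshold $q'_0 > 2$. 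A secondary subtlety is constructing the cut-off test function for the variational lower bound in~(1) using only the quantitative information afforded by $J(\lambda)$.
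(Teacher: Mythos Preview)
The paper does not prove this proposition; it is quoted verbatim from \cite[Proposition~1.4]{KM08} and invoked as a black box in Section~\ref{sect-pre}. There is therefore no argument in the paper to compare your sketch against. Your outline is a plausible high-level summary of the Kumagai--Misumi strategy---the upper bound in~(1) from~(A2) and the commute-time inequality $\mathbf{E}_0[\tau_r]\le R(0,B_r(0)^c)\mathds{V}_r(0)$, the lower bounds in~(1)--(3) by restricting to $\{r\in J(\lambda)\}$ and using Green-function/Cauchy--Schwarz identities, and the annealed upper bound~\eqref{an-upb} via a quenched estimate on the event in~\eqref{cond-VR-uplower} followed by a layer-cake integration in~$\lambda$---but any detailed verification would have to be made against \cite{KM08} itself rather than the present paper.
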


When we know that $p(\lambda)$ exhibits polynomial decay, as stated in (A3), we can establish certain limit properties for the random walk as follows.

\begin{proposition}[{\cite[Theorem 1.5]{KM08}}]\label{thm-quenched}
Suppose {\rm(A1)} and {\rm(A3)} in {\bf Assumption A} hold.
\begin{itemize}
\item[\rm (1)]  Then there exist constants $0<\gamma_1,\gamma_2,\gamma_3,\gamma_4<\infty$ and a subset $\Omega_0\subset \Omega$ with $\mathds{P}[\Omega_0]=1$ such that the following holds.
    \begin{itemize}
      \item[\rm(a)] For each $\omega\in \Omega_0$ and $x\in V(\omega)$, there exists $N_x(\omega)<\infty$ such that for all $n\geq N_x(\omega)$,
      \begin{equation*}
        \frac{(\log n)^{-\gamma_1}}{\phi(\Psi(n))}\leq p^{\omega}_{2n}(x,x)\leq  \frac{(\log n)^{\gamma_1}}{\phi(\Psi(n))}.
      \end{equation*}
      \item[\rm(b)] For each $\omega\in \Omega_0$ and $x\in V(\omega)$, there exists $r_x(\omega)<\infty$ such that for all $r\geq r_x(\omega)$,
      \begin{equation*}
        (\log r)^{-\gamma_2}\phi(r)\varphi(r)\leq \mathbf{E}_x^\omega[\tau_r]\leq (\log r)^{\gamma_2}\phi(r)\varphi(r).
      \end{equation*}
    \end{itemize}

    \item[\rm(2)] Suppose further that $\phi$ and $\varphi$ satisfy \eqref{cond-phi} and
\begin{equation}\label{cond-phi2}
C_3^{-1}r^D(\log r)^{-m_1}\leq \phi(r)\leq C_3r^D(\log r)^{m_1},\quad C_4^{-1}r^\alpha(\log r)^{-m_2}\leq \varphi(r)\leq C_4 r^{\alpha}(\log r)^{m_2}
\end{equation}
for all $r>0$, where $C_3,C_4\geq 1$, $D\geq 1$, $0<\alpha\leq 1$ and $m_1,m_2>0$. Then the following  holds.
\begin{itemize}
  \item[\rm (a)] The random walk is recurrent and for each $x\in V$, $\mathds{P}$-a.s.\ the quenched spectral dimension
  $$
  d_s^{(q)}(G):=-2\lim_{n\rightarrow \infty}\frac{\log p^\omega_{2n}(x,x)}{\log n}=\frac{2D}{D+\alpha}.
  $$
  \item[\rm (b)] $\lim_{r\rightarrow \infty}\frac{\log \mathbf{E}_x^\omega [\tau_r]}{\log r}=D+\alpha$.
  \item[\rm (c)] Let $W_n=\{X_0,X_1,\cdots, X_n\}$ and let $S_n=\sum_{w\in W_n}{\rm deg}(w)$. Then for each $\omega\in \Omega$ and $x\in V$,
  \begin{equation*}
  \lim_{n\rightarrow \infty}\frac{\log S_n}{\log n}=\frac{D}{D+\alpha}, \quad \mathds{P}_x^\omega\text{-a.s.}
  \end{equation*}
\end{itemize}
\end{itemize}
\end{proposition}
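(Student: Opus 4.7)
My plan is to follow the general recipe for deriving heat kernel and spectral-dimension estimates from quenched volume and effective-resistance bounds, treating Proposition \ref{thm-quenched} as a deterministic consequence of the random event $\{r\in J(\lambda_r)\}$ holding for all large $r$ with $\lambda_r$ growing logarithmically. The core inputs will be the standard relations, valid on any locally finite graph, between the mean exit time, the effective resistance and the volume (the Barlow--Kumagai inequalities)
\[
c\, R(x,B_r(x)^c)\,\mathds{V}_r(x)\;\leq\; \mathbf{E}^\omega_x[\tau_r]\;\leq\; R(x,B_r(x)^c)\,\mathds{V}_r(x),
\]
together with the two workhorse inequalities
\[
p^\omega_{2n}(x,x)\;\leq\;\frac{1}{\mathds{V}_r(x)}+\mathbf{P}^\omega_x[\tau_r\le n]
\quad\text{and}\quad
p^\omega_{2n}(x,x)\;\geq\;\frac{\mathbf{P}^\omega_x[X_n\in B_r(x)]^2}{\mathds{V}_r(x)},
\]
the second coming from Cauchy--Schwarz applied to $p_{2n}(x,x)=\sum_y p_n(x,y)^2\operatorname{deg}(y)$.

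\medskip

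First, I would use (A3) and Borel--Cantelli to upgrade (A1) into an almost sure statement. Along the geometric sequence $r_k=2^k$, set $\lambda_k=A k^{2/q_0}$; then $\sum_k \mathds{P}[r_k\notin J(\lambda_k)]\le C\sum_k A^{-q_0}k^{-2}<\infty$, so $\mathds{P}$-a.s.\ $r_k\in J(\lambda_k)$ for all $k\geq K(\omega)$. Using the monotonicity of $\mathds{V}_r(0)$ in $r$ and of $R(0,B_r(0)^c)$ in $r$, together with $\phi,\varphi$ satisfying \eqref{cond-phi}, this propagates to: $\mathds{P}$-a.s.\ there exists $\gamma_0$ and $r_0(\omega)$ with $r\in J\bigl((\log r)^{\gamma_0}\bigr)$ for all $r\ge r_0(\omega)$. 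The same conclusion extends to any fixed vertex $x\in V(\omega)$ by a union bound (modulo stationarity), at the cost of enlarging the log exponent. This is the single deterministic good event on which everything else is run.

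\medskip

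On this good event, part (1)(b) is immediate from the Barlow--Kumagai sandwich above. For the heat-kernel statement (1)(a), choose $r=r(n)$ such that $\phi(r)\varphi(r)\asymp n(\log n)^{2\gamma_0}$, i.e.\ $r\asymp\Psi(n)$ up to log factors, so that $\mathbf{E}^\omega_0[\tau_r]\ge 4n$. Markov's inequality then gives $\mathbf{P}^\omega_0[\tau_r\le n]\le \tfrac12$ after absorbing a log factor, yielding both
\[
p^\omega_{2n}(0,0)\leq \frac{C}{\mathds{V}_{r(n)}(0)}\leq \frac{(\log n)^{\gamma_1}}{\phi(\Psi(n))},
\qquad
p^\omega_{2n}(0,0)\geq \frac{c}{\mathds{V}_{r(n)}(0)}\geq \frac{(\log n)^{-\gamma_1}}{\phi(\Psi(n))}.
\]
Part (2)(a) and (2)(b) then follow by taking logarithms, since the assumption \eqref{cond-phi2} forces $\Psi(n)=n^{1/(D+\alpha)+o(1)}$ and hence $\phi(\Psi(n))\varphi(\Psi(n))=n^{1+o(1)}$ and $\phi(\Psi(n))=n^{D/(D+\alpha)+o(1)}$; the log powers obtained above are absorbed into the $o(1)$. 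Recurrence follows from $\sum_n p^\omega_{2n}(0,0)=\infty$, guaranteed since $D/(D+\alpha)<1$.

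\medskip

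Finally, for (2)(c), bound $S_n$ from above by $\mathds{V}_{M_n}(x)$, where $M_n=\max_{k\le n}d(x,X_k)$. Using the quenched exit-time bounds from (1)(b) and the Markov property, $M_n=\Psi(n)^{1+o(1)}=n^{1/(D+\alpha)+o(1)}$, so $S_n\le \phi(M_n)(\log)^{O(1)}=n^{D/(D+\alpha)+o(1)}$. For the matching lower bound, apply the commute-time identity to conclude that on the good event the expected number of visits to any vertex in $B_{M_n/2}(x)$ before time $n$ is at most polylog times $\phi(M_n)\varphi(M_n)/\mathds{V}_{M_n}(x)=\varphi(M_n)(\log)^{O(1)}$, so $S_n\geq n/\varphi(M_n)(\log)^{O(1)}=n^{D/(D+\alpha)+o(1)}$.

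\medskip

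The main obstacle I anticipate is the bookkeeping of logarithmic factors when converting the event $\{r\in J(\lambda)\}$, which is indexed by a single scale $r$ and a single base point $0$, into a statement holding simultaneously for all large $r$ and all vertices $x$, with the deterministic constants $\gamma_1,\gamma_2$ independent of $x$. The Borel--Cantelli step itself is easy, but the interpolation between dyadic scales and the joint control of volume, effective resistance to a sphere, and pointwise resistances to \emph{every} $y\in B_r(0)$ (the third clause in Definition \ref{def-Jlambda}) must all be synchronized. The assumption of polynomial decay in (A3) with $q_0>0$ is precisely what makes a $(\log r)^{O(1)}$ slack sufficient, and everything downstream is a deterministic consequence of the resulting good event.
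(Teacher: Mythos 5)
This proposition is not proved in the paper at all; it is quoted verbatim as {\cite[Theorem 1.5]{KM08}} and used as a black box, so there is no ``paper's own proof'' to compare against. Your proposal is instead a reconstruction of the Kumagai--Misumi argument, and in broad outline it follows the right strategy: Borel--Cantelli along dyadic scales with $\lambda_k$ growing polylogarithmically (which is exactly where the polynomial decay $q_0>0$ in (A3) enters), interpolation via monotonicity, and the standard volume/resistance $\Rightarrow$ exit time $\Rightarrow$ on-diagonal heat kernel pipeline.

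There is, however, a genuine gap at the pivotal step of part (1)(a). You write that after choosing $r(n)$ so that $\mathbf{E}^\omega_0[\tau_r]\ge 4n$, ``Markov's inequality then gives $\mathbf{P}^\omega_0[\tau_r\le n]\le \tfrac12$.'' This is backwards: Markov's inequality bounds $\mathbf{P}[\tau_r\ge t]$ from above using $\mathbf{E}[\tau_r]$, not $\mathbf{P}[\tau_r\le n]$. A large mean exit time does not by itself preclude $\tau_r$ from being small with high probability (think of a heavy-tailed $\tau_r$). The standard way to get a lower bound on $\mathbf{P}[\tau_r> n]$ is the restart inequality
\[
\mathbf{E}_x[\tau_r]\;\le\; n \;+\; \mathbf{P}_x[\tau_r>n]\,\sup_{y\in B_r(x)}\mathbf{E}_y[\tau_r],
\]
which forces $\mathbf{P}_x[\tau_r>n]\ge \bigl(\mathbf{E}_x[\tau_r]-n\bigr)/\sup_y\mathbf{E}_y[\tau_r]$. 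But this requires a \emph{uniform} upper bound on $\mathbf{E}_y[\tau_r]$ over all $y\in B_r(x)$, hence good volume and resistance control at every such $y$, not just at the centre $x$. Your Borel--Cantelli step only produces the event $\{r\in J(\lambda)\}$ for the single base point $0$ (and then $x$ by stationarity), which is not enough. Closing this gap is precisely the technical content of \cite{KM08}, where one must run the good event simultaneously over a net of centres and scales --- it is not a matter of bookkeeping logs, as your final paragraph suggests, but a genuinely extra layer of the argument. The rest of your sketch (recurrence from $D/(D+\alpha)<1$, the range estimate via the commute-time identity, and the derivation of (2) from (1) under \eqref{cond-phi2}) is fine modulo this.
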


In the final part of this section, we provide some remarks based on \cite[Remark 1.6]{KM08}. These remarks will also be used in the proof of Theorem \ref{thm-mr}.
\begin{remark}\label{remark}
\begin{itemize}

\item[\rm (1)] We introduce a slightly stronger version of $J(\lambda)$ as follows: for $\lambda>1$,
\begin{equation*}
\begin{aligned}
  \widehat{J}(\lambda)&:=\left\{r\in [1,\infty]:\ \lambda^{-1}\phi(r)\leq \mathds{V}_r(0)\leq \lambda \phi(r),\ R(0,B_r(0)^c)\geq \lambda^{-1}\varphi(r), \right.\\
  &\quad \quad \quad \quad \quad \quad \quad R(0,y)\leq \varphi(d(0,y))\text{ for all }y\in B_r(0)\big\}.
  \end{aligned}
  \end{equation*}
Assuming that (A1) in {\bf Assumption A} holds with respect to $\widehat{J}(\lambda)$ and that there exists a constant $C_{*,3}<\infty$ such that
\begin{equation}\label{exp-V}
\mathds{E}\left[1/\mathds{V}_r(0)\right]\leq C_{*,3}/\phi(r)
\end{equation}
(note that this condition is weaker than \eqref{cond-VR-uplower}). Then, both \eqref{an-lowb} and \eqref{an-upb} hold.

\item[\rm (2)] If we choose the resistance metric $R(\cdot,\cdot)$ as the metric $d(\cdot,\cdot)$, then it is clear that $R(0,y)\leq \lambda \varphi(d(0,y))$ holds with $\varphi(x)=x$ and $\lambda=1$.

\end{itemize}
\end{remark}

\section{Proof of Theorem \ref{thm-mr}}\label{sect-proof}

We turn our attention to the $\beta$-LRP model. Our goal is to provide the proof of Theorem \ref{thm-mr} in this section.
To achieve this, we will provide estimates for the diameters of the resistance metric in the $\beta$-LRP model in Section \ref{sect-resist},  which will aid us in applying Propositions \ref{lem-td} and \ref{thm-quenched} to complete the proof of Theorem \ref{thm-mr} in Section \ref{sect-pThm1}.

In the following, we will introduce some notations which will be used repeatedly.
Fix $\beta>0$. For $n\in\mathds{N}$ and $i,j\in [0,n)$, let $R_{[0,n)}(i,j)$ denote the effective resistance $i$ and $j$ restricted to the interval $[0,n)$ in the $\beta$-LRP model. Specifically, we define it as
\begin{equation*}\label{def-R}
1/R_{[0,n)}(i,j)=\inf\left\{\frac{1}{2}\sum_{k,l\in [0,n):\langle k,l \rangle \in \mathcal{E}}(f(k)-f(l))^2:\ f(i)=1\ \text{and }f(j)=0\right\}.
\end{equation*}
Denote
\begin{equation*}\label{def-Lambda}
\Lambda (n,\beta)=\max_{i,j\in [0,n)}\mathds{E}\left[R_{[0,n)}(i,j)\right].
\end{equation*}
For simplicity, we will write $\Lambda(n,\beta)$ as $\Lambda(n)$ when there is no risk of confusion.

\subsection{Diameters of the resistance metric}\label{sect-resist}
The aim of this subsection is to establish the following result, which provides upper bounds for the high moments of the diameters of the resistance metric.

\begin{proposition}\label{max-2r-moment}
  For all $\beta>0$ and $r\in\mathds{N}$, there exists a constant $C_{1,r}<\infty$ {\rm(}depending only on $\beta$ and $r${\rm)} such that
  \begin{equation*}
    \mathds{E}\left[\max_{x\in[0,n)}R_{[0,n)}(0,x)^{r}\right]\leq C_{1,r}\Lambda(n)^{r}.
  \end{equation*}
\end{proposition}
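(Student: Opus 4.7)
The idea is to run a dyadic recursion based on the triangle inequality for the effective-resistance metric, reducing to the case of large $r$ via Jensen's inequality. Write $M_n := \max_{x \in [0,n)} R_{[0,n)}(0,x)$ for brevity. Since $\mathds{E}[M_n^r]^{1/r}$ is non-decreasing in $r$ by Jensen, it suffices to establish the claim for $r$ arbitrarily large, with $C_{1,r}$ allowed to depend on $r$.

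Next, I would split $[0,n)$ dyadically. Set $m = \lfloor n/2 \rfloor$. For $x \in [0, m)$, Rayleigh monotonicity (restriction to a subinterval removes edges and hence only increases the effective resistance) gives $R_{[0,n)}(0, x) \leq R_{[0, m)}(0, x)$. For $x \in [m, n)$, combining the triangle inequality for the resistance metric with Rayleigh monotonicity yields $R_{[0,n)}(0, x) \leq R_{[0,n)}(0, m) + R_{[m, n)}(m, x)$. Combining these via the elementary bound $\max(A, B+C) \leq B + \max(A, C)$ produces the key recursion
\begin{equation*}
M_n \leq R_{[0,n)}(0, m) + \max\bigl(M_m^{L}, M_m^{R}\bigr),
\end{equation*}
where $M_m^L := \max_{x \in [0, m)} R_{[0,m)}(0,x)$ and $M_m^R := \max_{x \in [m, n)} R_{[m, n)}(m, x)$, each with the same distribution as $M_m$ (up to rounding) by translation invariance of the $\beta$-LRP model. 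Taking $L^r$ norms and using $\mathds{E}[\max(X,Y)^r] \leq 2 \mathds{E}[X^r]$ for $X \stackrel{d}{=} Y$ gives
\begin{equation*}
\|M_n\|_r \leq \|R_{[0,n)}(0, m)\|_r + 2^{1/r} \|M_m\|_r.
\end{equation*}

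Iterating this recursion $\lceil \log_2 n \rceil$ times, using $\Lambda(n) \asymp n^\delta$ from Lemma \ref{Lam-order}, together with an assumed individual pair-resistance $r$-th moment bound $\|R_{[0,n)}(0, x)\|_r \leq C_r \Lambda(n)$, produces a geometric series with common ratio $2^{1/r - \delta}$. Choosing $r > 1/\delta$ makes this ratio strictly less than $1$, so the series converges to a constant and yields $\|M_n\|_r \leq C_r \Lambda(n)$, i.e. $\mathds{E}[M_n^r] \leq C'_r \Lambda(n)^r$. Jensen's inequality then extends this to all $r \in \mathds{N}$.

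The main obstacle is proving the individual $r$-th moment bound $\mathds{E}[R_{[0,n)}(0, x)^r] \leq C_r \Lambda(n)^r$, since by definition $\Lambda(n)$ only controls the first moment. This is likely the technical heart of the argument and probably requires either a concentration estimate specific to the $\beta$-LRP model (for instance, an Efron--Stein-type inequality exploiting the independence of edges) or a bootstrap from the effective-resistance estimates in \cite{DFH25+}. If no such bound is directly available, a separate inductive or martingale argument tailored to single pair-resistances would be needed as a preliminary step before running the dyadic recursion above.
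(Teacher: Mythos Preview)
Your reduction from the maximum to individual pair-resistances is correct and in fact cleaner than the paper's. The paper also relies on the individual $r$-th moment bound as a separate lemma (Lemma~\ref{prop-rthmoment}), so you have correctly located the technical heart. Given that lemma, your direct $L^r$ recursion via a single midpoint split, with the geometric ratio $2^{1/r-\delta}<1$ for $r>1/\delta$, works exactly as you describe; the rounding issues when $n$ is odd are harmless. By contrast, the paper connects each $x\in[0,n)$ to $0$ through a full dyadic chain $x=x_0,x_1,\dots,x_m=0$ with $x_l\in 2^l\mathds{Z}$, bounds each increment by the resistance on its dyadic block, defines bad events at every scale, union-bounds to obtain a polynomial tail estimate for $\max_x R_{[0,n)}(0,x)$, and only then integrates the tail to recover the moment. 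Your route bypasses the tail bound entirely, though the paper's tail estimate \eqref{tailR-2r} is later reused in the proofs of Lemmas~\ref{low-V}--\ref{low-R}, so it is not wasted effort there.

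Regarding the obstacle you flag: the paper's proof of the individual moment bound (Lemma~\ref{prop-rthmoment}) is neither an Efron--Stein argument nor a direct concentration estimate, but an induction on $r$ along powers of two, bootstrapping from the second-moment bound established in \cite[Proposition~5.1]{DFH25+}. The induction step uses a renormalization picture: one partitions $[0,m^{n+1})$ into $m$ blocks of length $m^n$, identifies a sparse set of ``bridge'' endpoints, and shows that the resistance across $[0,m^{n+1})$ is controlled by (i) the maximum sub-block resistance excluding the worst one, handled by the inductive hypothesis at exponent $2^{k-1}$, plus (ii) the contribution from unbridged blocks, which appears with small enough expected multiplicity. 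This yields a recursion for $\Gamma(n,2^k):=\max_{i,j}\mathds{E}[R_{[0,m^n)}(i,j)^{2^k}]$ that closes once $m$ is chosen large. So the missing ingredient is indeed a bootstrap from \cite{DFH25+}, but one of a quite specific renormalization flavor.
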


The proof of Proposition \ref{max-2r-moment} will be presented at the end of this subsection. Before that,
we make some preparations. Let us start by introducing the following lemma, which shows that $\Lambda(n)$ exhibits polynomial growth with respect to $n$.

\begin{lemma}[{\cite[Theorem 1.1]{DFH25+}}]\label{Lam-order}
  For all $\beta>0$, there exist constants $\delta=\delta(\beta)>0$ and $0<c_1<C_1<\infty$ {\rm(}all depending only on $\beta${\rm)} such that for all $n\in \mathds{N}$,
  \begin{equation*}
    c_1 n^{\delta(\beta)}\leq  \Lambda(n,\beta)\leq C_1 n^{\delta(\beta)}.
  \end{equation*}
\end{lemma}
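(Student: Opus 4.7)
The plan is to exploit the scale invariance of the critical $\beta$-LRP and establish an approximate multiplicative relation for $\Lambda(n,\beta)$, from which polynomial bounds with a common exponent $\delta(\beta)>0$ follow via a Fekete-type argument. The lever is that, under the block coarse-graining $B_k=[km,(k+1)m)$, the probability that at least one long edge joins $B_k$ to $B_l$ is approximately $1-\exp\{-\beta|k-l|^{-2}(1+o(1))\}$ for $|k-l|$ large, so the coarse-grained model is itself asymptotically a $\beta$-LRP on $n$ vertices with the same parameter $\beta$ — this is the self-similarity of the kernel $|u-v|^{-2}$ that sits at criticality.

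First I would establish the supermultiplicative lower bound $\Lambda(nm,\beta)\ge c_\beta\Lambda(n,\beta)\Lambda(m,\beta)$. Identifying each block $B_k$ to a super-vertex and invoking Nash-Williams/contraction reduces the macroscopic resistance to that of the coarse-grained LRP on $n$ super-vertices, while within each block an independent factor of order $\Lambda(m,\beta)$ is contributed. Combining these two scales via independence across blocks and translation invariance gives the desired product lower bound for the expected resistance between the endpoints of $[0,nm)$, which is (up to constants) the maximizing pair for $\Lambda(nm,\beta)$ by a symmetry/monotonicity argument.

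Next I would establish the submultiplicative upper bound $\Lambda(nm,\beta)\le C_\beta\Lambda(n,\beta)\Lambda(m,\beta)$ by constructing a hierarchical trial unit flow from $0$ to $nm-1$. The flow uses a near-optimal flow in the coarse-grained block-level LRP, together with near-optimal within-block flows routing between the relevant entry and exit ports supplied by the block-level flow. The energy decomposes into between-block and within-block contributions whose expectations factor, up to universal constants depending only on $\beta$, into the claimed product. Combining the two bounds, $\log\Lambda(2^k,\beta)$ is — up to a bounded additive error — both sub- and superadditive in $k$, so Fekete's lemma yields $\delta:=\lim_{k\to\infty}k^{-1}\log_2\Lambda(2^k,\beta)\in[0,\infty)$; interpolating between consecutive powers of $2$ using monotonicity of $n\mapsto\Lambda(n,\beta)$ (itself a consequence of Rayleigh's principle applied to restrictions) converts this into the uniform two-sided polynomial bounds. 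Strict positivity of $\delta$ follows from a Nash-Williams cutset estimate: since the expected number of long edges straddling an integer position $k$ inside $[0,n)$ is only of order $\log n$, the effective resistance across $[0,n)$ must tend to infinity faster than any polylog, ruling out $\delta=0$.

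The main obstacle is the submultiplicative step: long edges whose endpoints straddle a block boundary do not decompose cleanly into purely within-block or purely between-block contributions, and naive concatenation of within-block flows mishandles them. The cleanest fix I can envisage is a coupling that reassigns each such edge either to the coarse-grained LRP (whose $\beta$ parameter is preserved exactly because of the self-similarity of $|u-v|^{-2}$) or to a within-block flow, while quantitatively controlling the error introduced by the reassignment; verifying that this coupling survives the passage to expectations — and in particular that the resulting error is lower order with respect to the product $\Lambda(n,\beta)\Lambda(m,\beta)$ — is where the critical exponent $s=2d=2$ plays the essential role, and is the delicate part of the argument.
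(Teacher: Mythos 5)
Note first that the paper you are reading does \emph{not} prove this lemma: it is imported verbatim from \cite[Theorem 1.1]{DFH25+}, and the present paper only \emph{uses} it (together with its higher-moment upgrade in Lemma~\ref{prop-rthmoment}). So there is no internal proof to compare against, and your proposal must be judged on its own merits. Unfortunately it has two gaps that I think are fatal as written.

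First, the ``supermultiplicative lower bound'' $\Lambda(nm)\ge c_\beta\Lambda(n)\Lambda(m)$ does not follow from the contraction argument you invoke. Contracting each block $B_k=[km,(k+1)m)$ to a super-vertex is a shorting operation, so Rayleigh monotonicity gives $R_{[0,nm)}(0,nm-1)\ge R_{\text{coarse}}(0,n-1)$, and taking expectations this only yields $\Lambda(nm)\gtrsim\Lambda(n)$ --- the factor $\Lambda(m)$ has been destroyed precisely because shorting erases all internal block structure. To recover the product you would need to argue that, in series, the internal block resistances still contribute multiplicatively after the macroscopic shortcuts are accounted for; that is not a consequence of Rayleigh/Nash--Williams monotonicity and is not spelled out here. (You also assert without proof that the maximum in the definition of $\Lambda$ is attained, up to constants, at the endpoints $0,n-1$; that is plausible but needs justification before you can reduce to $R(0,nm-1)$.) Second, the Nash--Williams argument for $\delta>0$ fails because the cutsets you propose are not disjoint: an edge $\langle i,j\rangle$ of length $\ell=j-i$ belongs to \emph{all} $\ell$ of the position-$k$ edge-cuts it straddles, whereas Nash--Williams requires pairwise disjoint cutsets. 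If you thin the cuts to make them disjoint (e.g.\ restrict the cut at $k$ to edges of length $<\mathrm{dist}(k,\text{neighbouring cuts})$) the resulting bound degrades to something far weaker than any polynomial --- the naive count actually gives only $R\gtrsim\log\log n$. Establishing $\delta>0$ at the critical exponent $s=2d$ is a genuinely deep part of \cite{DFH25+} and is not accessible to a simple cutset count; a multiscale/renormalisation argument is needed. The straddling-edge issue you flag in the submultiplicative direction is a third real obstacle, but you already recognise it; the two points above are the ones you seem to regard as routine when they are not.
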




In addition, we need the following high moment estimates for the effective resistance. It is important to note that estimates for the second moment have already been provided in \cite[Proposition 5.1]{DFH25+}. Give that the proof shares many similarities with that of \cite[Proposition 5.1]{DFH25+}, we will omit some details in order to concentrate on the differences that arise from the high moments.

\begin{lemma}\label{prop-rthmoment}
  For all $\beta>0$ and $r\in\mathds{N}$, there exists a constant $C_{2,r}<\infty$ {\rm(}depending only on $\beta$ and $r${\rm)} such that for all $N\in \mathds{N}$ and all $i,j\in [0,n)$,
  \begin{equation}\label{2r-moment}
  \mathds{E}\left[R_{[0,n)}(i,j)^{r}\right]\leq C_{2,r}\Lambda(n)^{r}.
  \end{equation}
\end{lemma}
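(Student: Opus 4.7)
The plan is to adapt the proof of \cite[Proposition 5.1]{DFH25+}, which establishes the $r=2$ case of \eqref{2r-moment}, to arbitrary $r\in\mathds{N}$. The broad strategy is a dyadic recursion on the scale $n$: splitting $[0,n)$ into two halves $I_1:=[0,\lfloor n/2\rfloor)$ and $I_2:=[\lfloor n/2\rfloor,n)$, one bounds $R_{[0,n)}(i,j)$ in terms of resistances restricted to each half together with a ``crossing'' contribution coming from edges connecting $I_1$ and $I_2$. Because the resistance networks on $I_1$ and $I_2$ depend on disjoint, and hence independent, sets of edges, this decomposition combines naturally with moment estimates.

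Concretely, for $i\in I_1$ and $j\in I_2$, the triangle inequality for the resistance metric together with the deterministically present unit-resistance nearest-neighbour edge $\{\lfloor n/2\rfloor-1,\lfloor n/2\rfloor\}$ yields
\begin{equation*}
  R_{[0,n)}(i,j)\leq R_{[0,\lfloor n/2\rfloor)}(i,\lfloor n/2\rfloor-1)+1+R_{[\lfloor n/2\rfloor,n)}(\lfloor n/2\rfloor,j),
\end{equation*}
and the two non-constant terms on the right are independent; a similar bound (with one of the terms absent) handles the case when $i,j$ lie in the same half. Taking $L^r$-norms via Minkowski's inequality and maximising over endpoints gives a naive recursion of the form $m_r(n)\leq 2\,m_r(\lfloor n/2\rfloor)+1$, where $m_r(n):=\max_{i,j\in[0,n)}\mathds{E}[R_{[0,n)}(i,j)^r]^{1/r}$. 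This only yields $m_r(n)\leq C_r n$, which is too weak by a factor $n^{1-\delta}$. Following the scheme of \cite{DFH25+}, one sharpens the estimate by adding the parallel contribution of the many long edges spanning the midpoint, which reduces the effective crossing resistance so that the coefficient on $m_r(\lfloor n/2\rfloor)$ in the recursion effectively drops to $2^\delta$; combined with Lemma \ref{Lam-order}, iterating then produces a bound at the polynomial rate $n^\delta$ matching $\Lambda(n)$.

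The main obstacle is controlling the crossing contribution in $L^r$ for arbitrary $r$, since the orthogonality argument available for $r=2$ in \cite{DFH25+} is not directly transferable. I would proceed by induction on $r$, with the case $r=1$ immediate from the definition of $\Lambda(n)$ and the case $r=2$ supplied by \cite[Proposition 5.1]{DFH25+}. For the inductive step I would expand $R_{[0,n)}(i,j)^r$ using the above decomposition and apply H\"older's inequality to each resulting cross term, reducing everything to lower moments of intra-half resistances (controlled by the induction hypothesis and Lemma \ref{Lam-order}) together with the $L^r$-norm of the crossing contribution. The latter is the genuinely new ingredient and requires showing that the parallel combination of the long edges between $I_1$ and $I_2$ has $r$-th moments of the same order as its first moment, which is itself comparable to a constant multiple of $\Lambda(n)/\Lambda(\lfloor n/2\rfloor)$ by Lemma \ref{Lam-order}. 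Combining these pieces closes the induction and yields \eqref{2r-moment}.
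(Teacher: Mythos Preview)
Your proposal has a genuine gap at the heart of the argument. You correctly identify that the naive dyadic recursion $m_r(n)\leq 2\,m_r(\lfloor n/2\rfloor)+1$ is too weak by a factor $n^{1-\delta}$, and you assert that the parallel contribution of long edges across the midpoint ``reduces the effective crossing resistance so that the coefficient on $m_r(\lfloor n/2\rfloor)$ effectively drops to $2^\delta$''. But this is precisely the crux of the matter, and no mechanism is given. In fact the argument in \cite{DFH25+} for $r=2$ does \emph{not} proceed this way either: it does not simply add a parallel edge to shave the coefficient. Rather, it uses an $m$-ary (not binary) subdivision together with a \emph{bridged/unbridged} dichotomy. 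One writes $R_{[0,m^{n+1})}(0,s)$ as a sum over at most $4m$ sub-resistances, extracts the single worst term, and observes that if the corresponding interval is bridged by a long edge one can route around it entirely; the remaining (non-maximal) terms are then controlled by the second-moment of the second largest among $O(m)$ independent copies, which picks up only a factor $m$ rather than $m^2$. The rare unbridged case is handled separately and contributes the recursive term with coefficient $2+f(\beta,m)$, which grows subgeometrically in $m$ and is what ultimately beats the volume count. None of this structure is present in your sketch, and your ``crossing contribution'' paragraph does not supply an alternative.

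The paper's proof follows exactly this bridged/unbridged scheme, lifted to higher moments by induction on $r=2^k$ (not on $r$ linearly): assuming the bound at exponent $2^{k-1}$, one squares the maximum of the non-worst pieces and uses independence to gain only a factor of $m^2$ on $\Lambda(m^n)^{2^k}$ (see \eqref{RH-1}), while the unbridged term feeds back the full $\Gamma(n,2^k)$ with the small coefficient $2^{2^k}(2+f(\beta,m))$. Iterating and choosing $m$ appropriately closes the recursion; general $r$ then follows by H\"older. Your proposed induction on $r$ via expanding $(\cdot)^r$ and H\"older on cross terms would reintroduce the bad factor of $2$ at each step unless you have already controlled the top moment of the crossing piece, which is circular.
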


\begin{proof}
We start by introducing some notations that will be used throughout the proof. In fact, these notations are also employed in the proof of \cite[Proposition 5.1]{DFH25+}.

For fixed $m,n\in \mathds{N}$, we say that an interval $[im^n,(i+1)m^n)$ is unbridged if there exists no edge $\langle k, l\rangle$ with $k\in[0,im^n)$ and $l\in [(i+1)m^n,m^{n+1})$. Conversely, if such an edge exists, we say that the interval is bridged by the edge $\langle k,l\rangle$. By the definition, it can be checked that for any $i\in [1,m-2]$,
\begin{equation*}
\mathds{P}\left[ [im^n,(i+1)m^n)\text{ is unbridged}\right]\leq 4i^{-\beta}.
\end{equation*}

We now define a set of edges $\mathcal{B}\subseteq \mathcal{E}$ as follows. For $i,j\in [0,m)$ with $i+1<j$, we add an edge connecting $[im^n,(i+1)m^n)$ to $[jm^n,(j+1)m^n)$ directly into $\mathcal{B}$ if the following conditions are satisfied:
\begin{itemize}

\item[(1)] $[im^n,(i+1)m^n)\sim [jm^n,(j+1)m^n)$, i.e., there exist $k\in [im^n,(i+1)m^n)$ and $l \in [jm^n,(j+1)m^n)$ such that $\langle k,l\rangle \in \mathcal{E}$;

\item[(2)] $[(i-l_1)m^n,(i-l_1+1)m^n)\nsim [(j+l_2)m^n,(j+l_2+1)m^n)$ for all $(l_1,l_2)\in \{0,\cdots, i\}\times \{0,\cdots, m-1-j\}\setminus \{(0,0)\}$.
\end{itemize}
Let $\mathcal{U}'$ be the set of endpoints of edges in $\mathcal{B}$.
We define
\begin{equation*}
\mathcal{U}:=\mathcal{U}'\cup \{0,m^n,\cdots, (m-1)m^n\}\cup\{m^n-1,\cdots, m^{n+1}-1\}.
\end{equation*}
For convenience, we arrange the elements of $\mathcal{U}$ in the ascending order and denote them as $\mathcal{U}=\{x_0,x_1,\cdots, x_u\}$.
By this construction, we have $\#\mathcal{U}\leq 4m$ and $|x_{i-1}-x_i|\leq n-1$.
Now for $x_{i-1},x_i$ with $(x_{i-1},x_i)\neq (km^n-1,km^n)$ for all $k$, we say that the interval $[x_{i-1},x_i]$ is bridged, if there exists an edge $\langle k,l\rangle \in \mathcal{B}$ with $k\leq x_{i-1}<x_i\leq l$.
Note that for a pair $(x_{i-1},x_i)$ that is not of the form $(km^n-1,km^n)$, there must exist a $j\in [0,m-1]$ such that $[x_{i-1},x_i]\subset [jm^n,(j+1)m^n)$. In this case,
$[x_{i-1},x_i]$ and $[jm^n,(j+1)m^n)$ are either both bridged or both not bridged simultaneously.

In the following, we will employ induction to prove that \eqref{2r-moment} holds for $r=2,2^2,2^3,\cdots$.
To this end, by \cite[Proposition 5.1]{DFH25+}, we can see that \eqref{2r-moment} holds for $r=2$. For fixed $k\in \mathds{N}$,
We assume that  \eqref{2r-moment} holds for all $2^l<2^k$ and all $n\in \mathds{N}$.

From now on, we aim to show that \eqref{2r-moment} holds for $r=2^k$.
To this end, we will first estimate $R_{[0,m^{n+1})}(0,s)^{2^k}$ for all $s\in (0,m^{n+1})$.
Denote $u_s=\max\{i\in \{1,\cdots, u\}:\ x_i\leq s\}\leq 4m$ and let
\begin{equation*}
\tau_s=\text{arg}\left(\max_{i\in\{1,\cdots, u_s-1\}}R_{[x_{i-1},x_i]}(x_{i-1},x_i)\vee R_{[x_{u_s},x_{u_s+1}]}(x_{u_s},s)\right).
\end{equation*}
We will say that the interval $[x_{\tau_s-1},x_{\tau_s}]$ is $s$-unbridged if it is not bridged by edges $e=\langle x_{\tau_{s1}}, x_{\tau_{s2}}\rangle\in \mathcal{B}$ with $\tau_{s1}<\tau_{s2}\leq u_s+1$.
Then similar to \cite[(5.13)]{DFH25+}, we can get that for any $s\in (0,m^{n+1})$,
\begin{equation}\label{Rmn}
\begin{aligned}
R_{[0,m^{n+1})}(0,s)^{2^k}
&\leq 2^{2^k}\left((4m)^{2^k}\left(\max_{i\neq \tau_s, i\leq u_s+1}R_{[x_{i-1},x_i]}(x_{i-1},x_i\wedge s)\right)^{2^k}\right.\\
&\quad \quad \quad \quad \quad\left.+\left(\max_{[x_{i-1},x_i]\text{ is $s$-unbridged, }i\leq u_s+1}R_{[0,m^{n+1})}(x_{i-1},x_i\wedge s)\right)^{2^k}\right)\\
&\leq 2^{2^k}\left((4m)^{2^k}\left(\max_{i\neq \tau_s, i\leq u_s+1}R_{[x_{i-1},x_i]}(x_{i-1},x_i\wedge s)\right)^{2^k}\right.\\
&\quad \quad \quad \quad \quad\left.+\left(\max_{[x_{i-1},x_i]\text{ is $s$-unbridged, }i\leq u_s+1}R_{[k_im^n,(k_i+1)m^n)}(x_{i-1},x_i\wedge s)\right)^{2^k}\right),
\end{aligned}
\end{equation}
where in the last inequality we chose $k_i\in \mathds{Z}$ such that $[x_{i-1},x_i]\subset [k_im^n,(k_i+1)m^n)\subset [0,m^{n+1})$ for all $i\leq u_s+1$.

We now bound both terms in the right-hand side of \eqref{Rmn} in expectation.
By using the induction and the argument presented in \cite[(5.14)]{DFH25+}, we have
\begin{align}
&\mathds{E}\left[\left(\max_{i\neq \tau_s, i\leq u_s+1}R_{[x_{i-1},x_i]}(x_{i-1},x_i\wedge s)\right)^{2^k}\right]\nonumber\\
&=\mathds{E}\left[\left(\left(\max_{i\neq \tau_s, i\leq u_s+1}R_{[x_{i-1},x_i]}(x_{i-1},x_i\wedge s)\right)^{2^{k-1}}\right)^2\right]\label{RH-1}\\
&\leq 16m^2\max_{x,y\in [0,m^n)}\left(\mathds{E}\left[\left(R_{[0,m^n)}(x,y)\right)^{2^{k-1}}\right]\right)^2\leq C_{2,2^k}^2m^2\Lambda(m^n)^{2^k}.\nonumber
\end{align}

Moreover, denote
\begin{equation*}
\Gamma(n,r)=\max_{i,j\in [0,m^n)}\mathds{E}\left[R_{[0,m^n)}(i,j)^{r}\right].
\end{equation*}
For the second term in the right-hand side of \eqref{Rmn}, by using the similar argument for the equation above \cite[(5.15)]{DFH25+}, we can get that
\begin{equation}\label{RH-2}
  \begin{aligned}
  &\mathds{E}\left[\left(\max_{[x_{i-1},x_i]\text{ is $s$-unbridged, }i\leq u_s+1}R_{[k_im^n,(k_i+1)m^n)}(x_{i-1},x_i\wedge s)\right)^{2^k}\right]\\
  &=\mathds{E}\left[\mathds{E}\left[\left(\max_{[x_{i-1},x_i]\text{ is $s$-unbridged, }i\leq u_s+1}R_{[k_im^n,(k_i+1)m^n)}(x_{i-1},x_i\wedge s)\right)^{2^k}\mid \mathcal{U}\right]\right]\\
  &\leq \Gamma(n,2^k) \mathds{E}\left[\sum_{i\leq u_s+1}\I_{\{[x_{i-1},x_i]\text{ is $s$-unbridged}\}}\right]\\
  &\leq \Gamma(n,2^k) \mathds{E}\left[\#\{j\in [0,u_s+1): [jm,(j+1)m)\ \text{is $s$-unbridged}\}\right]\\
  &\leq (2+f(\beta,m))\Gamma(n,2^k),
  \end{aligned}
\end{equation}
where
\begin{equation*}
f(\beta,m)=
\begin{cases}
\frac{20}{1-\beta}m^{1-\beta},\quad &\beta<1,\\
10+8\log m,\quad &1\leq \beta\leq 2,\\
20, \quad &\beta>2.
\end{cases}
\end{equation*}

Plugging \eqref{RH-1} and \eqref{RH-2} into \eqref{Rmn}, we derive
\begin{equation}\label{R0s}
\mathds{E}\left[R_{[0,m^{n+1})}(0,s)^{2^k}\right]\leq 2^{2^k}(2+f(\beta,m))\Gamma(n,2^k)+ \widetilde{C}_1(k)m^{2^k+2}\Lambda(m^n)^{2^k},
\end{equation}
where $\widetilde{C}_1(k)<\infty$ is a constant depending only on $\beta$ and $k$.
Note that the derivation of \eqref{R0s} does not depend on endpoints 0 and $s$. Thus, a similar argument yields
that \eqref{R0s} holds for $\mathds{E}\left[R_{[0,m^{n+1})}(r,s)^{2^k}\right]$ for all $0\leq s<r<m^{n+1}$.
As a result,
\begin{equation*}
\Gamma(n+1,2^k)\leq \widetilde{f}(\beta,m)\Gamma(n,2^k)+\widetilde{C}_1(k) m^{2^k+2}\Lambda(m^n)^{2^k},
\end{equation*}
where $\widetilde{f}(\beta,m):=2^{2^k}(2+f(\beta,m))$.
By iterating this inequality over all $l=1,\cdots, n$, we obtain that
\begin{equation}\label{iterate}
\Gamma(n+1,2^k)\leq \widetilde{C}_1(k) m^{2^k+2}\sum_{l=1}^n\left(\widetilde{f}(\beta,m)\right)^{n+1-l}\Lambda(m^l)^{2^k}.
\end{equation}

Now applying a similar argument as for \cite[(5.16)]{DFH25+} to \eqref{iterate}, we can get that there exists a constant $\widetilde{C}_2(k)<\infty$ (depending only on $\beta$ and $k$) such that
\begin{equation}\label{RN}
\mathds{E}\left[R_{[0,N)}(i,j)^{2^k}\right]\leq \widetilde{C}_2(k) \Lambda(N)^{2^k}
\end{equation}
holds for all $i,j\in [0,N)$ and $N=m, m^2,\cdots$. Furthermore, we get that \eqref{RN} holds for general $N\in \mathds{N}$ by the argument in \cite[Page 56]{DFH25+}. That is, \eqref{2r-moment} holds for $r=2^k$.

By the above induction, we obtain that \eqref{2r-moment} holds for all $r=2^k$ with $k\in \mathds{N}$. Finally, for general $r\in \mathds{N}$, let $k\in \mathds{N}$ be the number such that $2^{k-1}<r\leq 2^k$. Then by the H\"older inequality, we have
\begin{equation*}
\max_{i,j\in [0,n)}\mathds{E}\left[R_{[0,n)}(i,j)^{r}\right]\leq \max_{i,j\in [0,n)}\left(\mathds{E}\left[R_{[0,n)}(i,j)^{2^k}\right]\right)^{r/2^k}\leq  \widetilde{C}_3(r)\Lambda(n)^r
\end{equation*}
for all $n\in \mathds{N}$ and some constant $\widetilde{C}_3(r)<\infty$.
\end{proof}

With the above lemmas at hand, we are ready to prove Proposition \ref{max-2r-moment}.

\begin{proof}[Proof of Proposition \ref{max-2r-moment}]
For a fixed $n\in \mathds{N}$, let $m\in\mathds{N}$ be such that $n\in (2^{m-1}, 2^m]$.
For each $x\in [0,n)$, we claim that we can construct a sequence of $\{x_l\}_{l=0}^m$ with $x_0=x$ and $x_m=0$ such that the following properties hold:
  \begin{enumerate}
    \item For each $l\in[0,m]$, $x_l\in 2^{l}\mathds{Z}\cap[0,n)$.
    \medskip

    \item For each $l\in[0,m-1]$, $x_l-x_{l+1}\in \{0, 2^{l}\}$.
  \end{enumerate}
Indeed, to construct the sequence $\{x_l\}_{l=0}^m$, we will proceed by iteration.
Let $x_0=x$. If $x$ is even, set $x_1=x$; otherwise, set $x_1=x-1$.
After defining $x_0,\cdots, x_{l-1}$ for some $l\in[1,m]$, we proceed as follows: If $x_{l-1}$ is divisible by $2^{l}$, set $x_l=x_{l-1}$; otherwise, set $x_l=x_{l-1}-2^{l-1}$.
It can be verified that (1) and (2) hold for each $l\in[1,m]$ and that $x_l\in[0,x]$ for each $l\in[0,m]$. Furthermore, we can see that $x_m=0$ since $x_m\in 2^m\mathds{Z}$ and $0\leq x_m\leq x<n\leq 2^m$.

  Now by the triangle inequality of effective resistance, we can see that for each $x\in [0,n)$,
  \begin{equation}\label{de-Rx}
    \begin{aligned}
      R_{[0,n)}(0,x)\leq \sum_{l=0}^{m-1} R_{[0,n)}(x_{l+1},x_{l})
      &\leq \sum_{l=0}^{m-1} \I_{\{x_{l}\neq x_{l+1}\}} R_{[x_{l+1},x_{l})}(x_{l+1},x_l) \\
      &\leq \sum_{l=0}^{m-1} \max_{y\in[2^{l},2^m)\cap 2^{l}\mathds{Z}}R_{[y-2^{l},y)}(y-2^{l},y).
    \end{aligned}
  \end{equation}

In the following, let $M>0$ be an arbitrary positive constant.
For each $l\in[0,m)$ and each $y\in [2^l,2^m)\cap 2^{l}\mathds{Z}$, define $B_{l,y}$ to be the event that
\begin{equation*}
 R_{[y-2^{l},y)}(y-2^{l},y)>M2^{\delta m}\cdot (2/3)^{(m-l)\delta},
\end{equation*}
where $\delta\in (0,1)$ (depending only on $\beta$) is the constant defined in Lemma \ref{Lam-order}.
Thus, when the event $(\cup_{l=0}^{m-1}\cup_{j=1}^{2^{m-l}-1} B_{l,2^l j})^c$ occurs, we obtain from \eqref{de-Rx} that
\begin{equation*}
  \max_{x\in[0,n)} R_{[0,n)}(0,x)\leq \sum_{l=0}^{m-1}M 2^{\delta m}\cdot (2/3)^{(m-l)\delta}\leq \widetilde{C}_1 M2^{\delta m},
\end{equation*}
where $\widetilde{C}_1:=(1-(2/3)^{\delta})^{-1}<\infty$ depending only on $\beta$. Combining with the translation invariance of the $\beta$-LRP model, we get that
\begin{equation}\label{tail-1}
  \mathds{P}\left[\max_{x\in[0,n)} R_{[0,n)}(0,x)>\widetilde{C}_1 M2^{\delta m}\right]\leq \sum_{l=0}^{m-1}\sum_{j=1}^{2^{m-l}-1}\mathds{P}\left[B_{l,2^{l}j}\right]\leq \sum_{l=0}^{m-1} 2^{m-l}\mathds{P}[B_{l,0}].
\end{equation}

Additionally, it follows from Markov's inequality, Lemmas \ref{prop-rthmoment} and \ref{Lam-order} that for each $l\in [0,m)$ and each  $r\in\mathds{N}$,
\begin{equation*}
  \begin{aligned}
    \mathds{P}[B_{l,0}]&=\mathds{P}\left[R_{[0,2^{l})}(0,2^{l})>M2^{\delta m}\cdot (2/3)^{(m-l)\delta}\right]\\
    &\leq M^{-r} 2^{-\delta mr} (3/2)^{(m-l)\delta r} \mathds{E}\left[R_{[0,2^{l})}(0,2^{l})^{r}\right]\\
    &\leq C_{2,r}M^{-r} 2^{-\delta mr} (3/2)^{(m-l)\delta r} \Lambda(2^l)^{r}\\
    &\leq C_{2,r}C_1^{r} M^{-r} 2^{-\delta mr} (3/2)^{(m-l)\delta r} 2^{\delta lr}
    \leq C_{2,r}C_1^{r}M^{-r} (3/4)^{(m-l)\delta r},
  \end{aligned}
\end{equation*}
where $C_1<\infty$ (depending only on $\beta$) and  $C_{2,r}<\infty$ (depending only on $\beta$ and $r$) are the constants defined in Lemmas \ref{Lam-order} and \ref{prop-rthmoment}, respectively.
Applying this to \eqref{tail-1} yields that for each $r\in\mathds{N}$,
\begin{equation}\label{tail-3}
  \mathds{P}\left[\max_{x\in[0,n)} R_{[0,n)}(0,x)>\widetilde{C}_1 M2^{\delta m}\right]\leq C_{2,r}C_1^{r}M^{-r}\sum_{l=0}^{m-1}2^{m-l}(3/4)^{(m-l)\delta r}.
\end{equation}
We now take $r>20/\delta$ such that $(3/4)^{\delta r}<(3/4)^{20}<1/4$. Applying this to \eqref{tail-3}, we can see that for any $n\in\mathds{N}$ and $M>0$,
\begin{equation}\label{tail-2}
  \mathds{P}\left[\max_{x\in[0,n)} R_{[0,n)}(0,x)>\widetilde{C}_1 M2^{\delta m}\right]\leq C_{2,r}C_1^{r}M^{-r}\sum_{l=0}^{m-1}2^{-(m-l)}\leq C_{2,r}C_1^{r}M^{-r}.
\end{equation}
Recall that $2^{m-1}\leq n<2^m$. Let $M'=\widetilde{C}_1 2^\delta c_1^{-1} M$, where $c_1>0$ is the constant (depending only on $\beta$) defined in Lemma \ref{Lam-order}. Then, by Lemma~\ref{Lam-order}, we have
$$
\widetilde{C}_1 M2^{\delta m}\leq M'c_{1}n^\delta\leq M'\Lambda(n).
$$
Combining this with \eqref{tail-2} and considering the arbitrariness of $M>0$, we obtain that for each $r>10/\delta$, there exists a constant $\widetilde{C}_{1,r}<\infty$ (depending on $r$ and $\beta$) such that for each $n\in\mathds{N}$ and  $M'>0$,
\begin{equation}\label{tailR-2r}
    \mathds{P}\left[\max_{x\in[0,n)} R_{[0,n)}(0,x)>M'\Lambda(n)\right]
    \leq C_{2,r}C_1^{r}(\widetilde{C}_1 2^\delta)^{r} c_1^{-r}(M')^{-r}=:\widetilde{C}_{1,r}(M')^{-r}.
\end{equation}
Consequently, for each $p\in \mathds{N}$, applying \eqref{tailR-2r} with $r>p\vee (20/\delta)$, we obtain that
\begin{equation*}
\begin{aligned}
  \Lambda(n)^{-p}\mathds{E}\left[\max_{x\in [0,n)}R_{[0,n)}(0,x)^{p}\right]
  &= \int_0^\infty \mathds{P}\left[\max_{x\in [0,n)}R_{[0,n)}(0,x)^{p}\geq t\Lambda(n)^{p}\right]\d t\\
  &= \int_0^\infty \mathds{P}\left[\max_{x\in [0,n)}R_{[0,n)}(0,x)\geq t^{\frac{1}{p}}\Lambda(n)\right]\d t\\
  &\leq \widetilde{C}_{1,r}+ \widetilde{C}_{1,r}\int_1^\infty t^{-\frac{r}{p}}\d t<\infty,
  \end{aligned}
\end{equation*}
which implies the desired result.
\end{proof}

\subsection{Proof of Theorem~\ref{thm-mr}}\label{sect-pThm1}

In this subsection, we primarily complete the proof of Theorem \ref{thm-mr} by verifying that the $\beta$-LRP model satisfies the conditions outlined in Propositions \ref{lem-td} and \ref{thm-quenched}.

Throughout this section, we will take the random graph $G$ from Section \ref{sect-pre} to be the $\beta$-LRP model, and we will use the resistance metric $R(\cdot,\cdot)$ as the metric $d(\cdot,\cdot)$ for $G$.
Additionally, we set $\varphi(r)=r$ and $\phi(r)=r^{1/\delta}$, where $\delta\in (0,1)$ is the exponent defined in Lemma \ref{Lam-order}. under this setting, for any $x\in \mathds{Z}$ and $r\geq 0$,  the ball centered at $x$ with radius $r$ with respect to the metric $d$ defined in \eqref{def-ball} is given by
\begin{equation}\label{def-RB}
B_r(x)=\left\{y\in \mathds{Z}:\ R(x,y)<r\right\}.
\end{equation}
Recall that $\mathds{V}_r(x)=\sum_{y\in B_r(x)}\text{deg}(y)$.

We start with estimating  the lower tail of $\mathds{V}_r(0)$.

\begin{lemma}\label{low-V}
  For all $\beta>0$, there exist constants $C_2<\infty$ and $ q_1>2$ {\rm(}both depending only on $\beta${\rm)} such that for any $r\geq 1$ and $\lambda\geq 1$,
  \begin{equation*}\label{eq-low-V}
    \mathds{P}\left[\mathds{V}_r(0)\leq \lambda^{-1} r^{1/\delta}\right]\leq C_2\lambda^{-q_1},
  \end{equation*}
  where $\delta\in (0,1)$ is the exponent defined in Lemma \ref{Lam-order}.
\end{lemma}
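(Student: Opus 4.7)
The strategy is to convert a small value of $\mathds{V}_r(0)$ into a large value of the resistance diameter of an explicit integer interval, and then apply Proposition~\ref{max-2r-moment} through Markov's inequality.

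Since both nearest-neighbour edges of every vertex are present, $\text{deg}(y)\geq 2$ for all $y\in\mathds{Z}$, so $\mathds{V}_r(0)\geq 2\#B_r(0)$. Hence on the event $A_r:=\{\mathds{V}_r(0)\leq\lambda^{-1}r^{1/\delta}\}$ we have $\#B_r(0)\leq V:=\tfrac12\lambda^{-1}r^{1/\delta}$. For $V<1$ the event $A_r$ is vacuous, since $\mathds{V}_r(0)\geq\text{deg}(0)\geq 2$, so we may assume $V\geq 1$ and set $n:=\lceil V\rceil+1\geq 2$. The integer interval $[-n,n]$ contains $2n+1>V$ elements, so it cannot lie entirely inside $B_r(0)$; since $0\in B_r(0)$, there must exist some $k\in[1,n]\cup[-n,-1]$ with $R(0,k)\geq r$. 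Restriction to a subgraph only increases effective resistance, so when $k\in[1,n]$ one has $R_{[0,n+1)}(0,k)\geq R(0,k)\geq r$, and the case $k\in[-n,-1]$ is handled symmetrically using the reflection invariance $i\mapsto -i$ of the $\beta$-LRP law. A union bound then yields
\[
  \mathds{P}(A_r)\leq 2\,\mathds{P}\!\left[\max_{x\in[0,n+1)}R_{[0,n+1)}(0,x)\geq r\right].
\]
Applying Markov's inequality together with Proposition~\ref{max-2r-moment} and Lemma~\ref{Lam-order}, for any $p\in\mathds{N}$ the right-hand side is bounded by
\[
  2r^{-p}\,C_{1,p}\,\Lambda(n+1)^p \leq 2r^{-p}C_{1,p}C_1^{p}(n+1)^{\delta p}.
\]
Using $V\geq 1$ gives $n+1\leq V+3\leq 4V=2\lambda^{-1}r^{1/\delta}$, so the bound reduces to $\widetilde{C}_p\,\lambda^{-\delta p}$ for a constant $\widetilde{C}_p<\infty$ depending only on $\beta$ and $p$. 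Choosing any integer $p>2/\delta$ then yields $q_1:=\delta p>2$ and an absolute constant $C_2$, as required.

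The main obstacle, though essentially bookkeeping, is the need to convert the "small-volume" event into a statement about resistance restricted to a \emph{contiguous} integer interval, since Proposition~\ref{max-2r-moment} is phrased only for such intervals. The universal degree bound $\text{deg}(\cdot)\geq 2$ forces $\#B_r(0)$ to be small on $A_r$, and the splitting of $[-n,n]$ into its two half-intervals (identified in distribution via reflection invariance) is precisely what enables this reduction.
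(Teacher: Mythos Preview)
Your argument is correct and follows essentially the same route as the paper: a small volume forces some vertex in a short interval around the origin to have resistance at least $r$ from $0$, and then Proposition~\ref{max-2r-moment} together with Lemma~\ref{Lam-order} and Markov's inequality delivers a polynomial tail bound $\lambda^{-\delta p}$ for any integer moment $p$. The only cosmetic differences are that the paper uses the cruder bound $\#B_r(0)\le \mathds{V}_r(0)$ (instead of your factor $2$ from $\deg\ge 2$), phrases the symmetry as ``translation invariance'' rather than reflection, and picks a specific exponent yielding $q_1=4$; your version is slightly more explicit about the monotonicity step $R(0,k)\le R_{[0,n+1)}(0,k)$ and about integer rounding, but the idea is the same.
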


\begin{proof}
  For fixed $r\geq 1$ and $\lambda\geq 1$, without loss of generality, we assume that $1/\delta$ and $\lambda^{-1}r^{1/\delta}$ are integers. Otherwise, we can replace them with $\lfloor 1/\delta\rfloor$ and $\lfloor \lambda^{-1}r^{1/\delta}\rfloor$, respectively.

  Note that if $\mathds{V}_r(0)\leq \lambda^{-1}r^{1/\delta}$, then it follows that $\# B_r(0)\leq \lambda^{-1}r^{1/\delta}$. This implies that there exists $x\in (-\lambda^{-1}r^{1/\delta},\lambda^{-1}r^{1/\delta})$ such that $x\notin B_r(0)$ and $R(0,x)\geq r$. As a result, by the translation invariance of the LRP model, we obtain
  \begin{equation}\label{eq-low-1}
  \begin{aligned}
    \mathds{P}\left[\mathds{V}_r(0)\leq \lambda^{-1}r^{1/\delta}\right]&\leq \mathds{P}\left[\max_{x\in (-\lambda^{-1}r^{1/\delta},\lambda^{-1}r^{1/\delta})}R(0,x)\geq r\right]\\
    &\leq \mathds{P}\left[\max_{x\in (-\lambda^{-1}r^{1/\delta},0]}R(0,x)\geq r\right]+ \mathds{P}\left[\max_{x\in [0,\lambda^{-1}r^{1/\delta})}R(0,x)\geq r\right]\\
    &=2\mathds{P}\left[\max_{x\in [0,\lambda^{-1}r^{1/\delta})}R(0,x)\geq r\right].
    \end{aligned}
  \end{equation}
  In addition, using  Proposition~\ref{max-2r-moment} with $r=2/\delta$ and Markov's inequality, we have
  \begin{equation*}
    \mathds{P}\left[\max_{x\in [0,\lambda^{-1}r^{1/\delta})}R(0,x)\geq r\right]
    \leq C_{1,2/\delta}r^{-4/\delta}\Lambda(\lambda^{-1}r^{1/\delta})^{4/\delta}
    \leq \widetilde{C}_1\lambda^{-4},
  \end{equation*}
  where $C_{1,\cdot}$ is the constant defined in Proposition~\ref{max-2r-moment} and $\widetilde{C}_1$ is a constant depending only on $\beta$.
  Applying this result to \eqref{eq-low-1} yields the desired conclusion with $q_1=4$.
\end{proof}

From the lower tail of $\mathds{V}_r(0)$, we can immediately derive the following corollary, which corresponds to condition \eqref{exp-V} in Remark~\ref{remark}.

\begin{corollary}\label{low-V-2}
  For all $\beta>0$, there exists a constant $C_3<\infty$ {\rm(}depending only on $\beta${\rm)} such that for each $r\geq 1$ and $\lambda\geq 1$,
  \begin{equation*}\label{eq-low-V-2}
    \mathds{E}\left[1/\mathds{V}_r(0)\right]\leq C_3r^{-1/\delta}.
  \end{equation*}
\end{corollary}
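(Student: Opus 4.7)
The plan is to derive the bound on $\mathds{E}[1/\mathds{V}_r(0)]$ directly from the tail estimate in Lemma~\ref{low-V} via the layer cake (tail integral) formula. Since $1/\mathds{V}_r(0)$ is a nonnegative random variable, I would write
\[
\mathds{E}\left[1/\mathds{V}_r(0)\right] \;=\; \int_0^\infty \mathds{P}\!\left[1/\mathds{V}_r(0) > t\right] \, \d t,
\]
and then perform the change of variables $t = r^{-1/\delta} \lambda$, so that the event $\{1/\mathds{V}_r(0) > t\}$ becomes $\{\mathds{V}_r(0) < \lambda^{-1} r^{1/\delta}\}$. This yields
\[
\mathds{E}\left[1/\mathds{V}_r(0)\right] \;=\; r^{-1/\delta} \int_0^\infty \mathds{P}\!\left[\mathds{V}_r(0) < \lambda^{-1} r^{1/\delta}\right] \d \lambda.
\]

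Next, I would split the integral at $\lambda = 1$. For $\lambda \in (0,1)$, I bound the probability trivially by $1$, contributing at most $1$. For $\lambda \geq 1$, I invoke Lemma~\ref{low-V} to get $\mathds{P}[\mathds{V}_r(0) \leq \lambda^{-1} r^{1/\delta}] \leq C_2 \lambda^{-q_1}$, so that
\[
\int_1^\infty \mathds{P}\!\left[\mathds{V}_r(0) < \lambda^{-1} r^{1/\delta}\right] \d \lambda \;\leq\; C_2 \int_1^\infty \lambda^{-q_1} \d \lambda \;=\; \frac{C_2}{q_1 - 1},
\]
where the integral converges because $q_1 > 2 > 1$. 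Combining the two contributions yields $\mathds{E}[1/\mathds{V}_r(0)] \leq C_3 r^{-1/\delta}$ with $C_3 := 1 + C_2/(q_1 - 1)$, depending only on $\beta$.

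There is no real obstacle here: the only quantitative input is Lemma~\ref{low-V}, and the statement is an immediate consequence once its tail bound is in place. The only minor point to verify is that the polynomial decay exponent $q_1$ satisfies $q_1 > 1$ so the tail integral converges, which is already guaranteed by the stronger conclusion $q_1 > 2$ of Lemma~\ref{low-V}.
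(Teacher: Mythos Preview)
Your proposal is correct and is exactly the argument the paper has in mind: the corollary is stated immediately after Lemma~\ref{low-V} with the remark that it follows directly from the lower tail of $\mathds{V}_r(0)$, and your layer-cake computation is the standard way to make this precise.
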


Next, we consider the upper tail of $\mathds{V}_r(0)$.

\begin{lemma}\label{up-V}
For all $\beta>0$,  there exist constants $c_2,\ q_2>0$ {\rm(}both depending only on $\beta${\rm)} such that for any $r\geq 1$ and $\lambda\geq 1$,
  \begin{equation*}\label{eq-up-V}
    \mathds{P}\left[\mathds{V}_r(0)\geq \lambda r^{1/\delta}\right]\leq c_2\lambda^{-q_2}.
  \end{equation*}
\end{lemma}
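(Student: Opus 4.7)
The strategy is to decompose the event according to the Euclidean extent of the resistance ball $B_r(0)$. Writing $L_+=\max(B_r(0)\cap \mathds{Z}_{\ge 0})$ and $L_-=-\min(B_r(0)\cap \mathds{Z}_{\le 0})$, and setting $L:=\lambda^{1/2}r^{1/\delta}$, we have
\[
\bigl\{\mathds{V}_r(0)\ge \lambda r^{1/\delta}\bigr\}\subseteq \bigl\{\max(L_+,L_-)\ge L\bigr\}\;\cup\;\Bigl\{B_r(0)\subseteq[-L,L]\text{ and }\sum_{y\in[-L,L]}\deg(y)\ge \lambda r^{1/\delta}\Bigr\}.
\]
It suffices to bound the probability of each of these two events by a constant times $\lambda^{-q_2}$ for a suitable exponent $q_2>0$ and then combine via the union bound.

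\emph{Volume given bounded extent.} For the second event, $\sum_{y\in[-L,L]}\deg(y)$ equals twice the number of edges with both endpoints in $[-L,L]$ plus the number of edges with exactly one endpoint there. Since $\mathds{E}[\deg(0)]<\infty$ (using $p_{0j}\lesssim\beta/|j|^2$ so $\sum_{|j|\ge 2}p_{0j}<\infty$), the mean of this sum is of order $L=\lambda^{1/2}r^{1/\delta}$. Because it is a sum of independent Bernoulli variables (one per potential edge), high-moment estimates via Rosenthal's inequality yield $\mathds{P}[\sum_{y\in[-L,L]}\deg(y)\ge \lambda^{1/2}\cdot L]\le c\lambda^{-q/2}$ for any prescribed $q$.

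\emph{Euclidean extent.} For the first event $\{\max(L_+,L_-)\ge L\}$, the task is to show that with high probability no $y$ with $|y|\ge L$ lies in $B_r(0)$, i.e.\ $R(0,y)\ge r$ for all such $y$. The natural route is a dyadic decomposition over shells $|y|\in [2^kL,2^{k+1}L)$ for $k\ge 0$, combined with Proposition~\ref{max-2r-moment} applied to appropriately enlarged intervals and the matching two-sided estimate $\Lambda(n)\asymp n^\delta$ in Lemma~\ref{Lam-order}. A Paley--Zygmund type argument is used to turn the two-sided moment control on the restricted resistance into a quantitative lower-tail statement for $R(0,y)$ on the relevant scale, and a union bound across dyadic shells, balanced against the decay of $(r/|y|^\delta)$, closes the estimate.

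\textbf{Main obstacle.} The principal difficulty is the lower-tail control on $R(0,y)$ at distances $|y|\gg r^{1/\delta}$. Proposition~\ref{max-2r-moment} together with Markov's inequality gives only \emph{upper}-tail bounds on the restricted resistance, and the inequality $R(0,y)\le R_{[-n,n)}(0,y)$ runs in the ``wrong'' direction for a direct lower-tail estimate. Overcoming this requires genuine use of the lower half of the two-sided bound $\Lambda(n)\asymp n^\delta$ from Lemma~\ref{Lam-order} together with the high-moment control of Lemma~\ref{prop-rthmoment}, arranged so that the extra edges going outside a restriction interval contribute only a controllable perturbation. This is the essential technical ingredient linking the upper tail of $\mathds{V}_r(0)$ to the two-sided resistance estimates proved in \cite{DFH25+}.
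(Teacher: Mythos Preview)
Your decomposition into ``large Euclidean extent'' versus ``large volume on a bounded Euclidean box'' is exactly the paper's, and your treatment of the second event is fine; the paper uses Chernoff's inequality on the number of edges touching $[-n,n]$ and gets superpolynomial decay, but Rosenthal would work too.

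The gap is in the extent event. You need an \emph{upper bound} on
\[
\mathds{P}\bigl[\exists\,y\notin[-L,L]:\ R(0,y)<r\bigr]\ \le\ \mathds{P}\bigl[R(0,[-L,L]^c)<r\bigr]
\]
that decays polynomially in $\lambda$. A Paley--Zygmund argument applied to the restricted resistance can at best produce a statement of the form $\mathds{P}[R_{[-M,M]}(0,y)\ge c|y|^\delta]\ge c'$ for a \emph{fixed constant} $c'>0$: this is the wrong direction (restricted resistance dominates unrestricted), the wrong tail (a lower bound on a probability rather than an upper bound), and a constant rather than a $\lambda^{-q}$ decay. No union bound over dyadic shells can be closed from such an input. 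Your ``perturbation'' remark does not indicate how to upgrade this to the required quantitative lower-tail bound on the \emph{full-graph} resistance, and the tools you cite (Lemma~\ref{Lam-order}, Lemma~\ref{prop-rthmoment}, Proposition~\ref{max-2r-moment}) do not yield it by themselves: they control only upper tails of (restricted) resistances.

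The paper closes this step by invoking \cite[Corollary~1.2]{DFH25+}, which gives precisely the point-to-box lower-tail estimate
\[
\mathds{P}\bigl[R(0,[-n,n]^c)<r\bigr]\ \le\ \bigl(r\,n^{-\delta}\bigr)^{q_0}
\]
for some $q_0>0$. With the choice $n=\lfloor(8\e\mu_\beta)^{-1}\lambda r^{1/\delta}\rfloor$ (so that the volume term has a margin and yields $2^{-\lambda/2}$), this bound is $\lesssim \lambda^{-q_0\delta}$, and combining the two terms finishes the proof. The same corollary is the engine behind Lemma~\ref{low-R}, so it is the natural input here as well; you should replace the Paley--Zygmund step by a direct appeal to it.
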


\begin{proof}
  Note that for any $r\geq 1$ and $n\in \mathds{N}$, if $R(0,[-n,n]^c)\geq r$, then it follows from the monotonicity of the effective resistance that
  $$
  R(0,x)\geq r\quad \text{for each } x\in [-n,n]^c,
$$
   which implies that $B_r(0)\subset [-n,n]$.
  Consequently, we conclude that if $B_r(0)\nsubseteq [-n,n]$, then $R(0,[-n,n]^c)<r$. This leads us to
  \begin{equation}\label{up-V-1}
    \begin{aligned}
      \mathds{P}\left[\mathds{V}_r(0)\geq \lambda r^{1/\delta}\right]
      &\leq \mathds{P}[B_r(0)\nsubseteq [-n,n]]+\mathds{P}\left[B_r(0)\subset [-n,n],\ \mathds{V}_r(0)\geq \lambda r^{1/\delta}\right]\\
      &\leq \mathds{P}\left[R(0,[-n,n]^c)<r\right]+\mathds{P}\left[\sum_{x\in[-n,n]}\mathrm{deg}(x)\geq \lambda r^{1/\delta}\right]
    \end{aligned}
  \end{equation}
for each $n\in \mathds{N}$.

  We now set $n=\lfloor \frac{1}{8\e\mu_\beta}\lambda r^{1/\delta}\rfloor $, where $\mu_\beta:=\mathds{E}[\mathrm{deg}(0)]$.
  Let $\mathcal{E}_n$ be the set of all edges that have at least one endpoint in $[-n,n]$. Then, we have
  $$
  \#\mathcal{E}_n\leq \sum_{x\in[-n,n]}\mathrm{deg}(x)\leq 2\#\mathcal{E}_n,
  $$
  Combining this with the translation invariance of the LRP model yields
  $$
  \widetilde{\mu}_n:=\mathds{E}[\#\mathcal{E}_n]\leq \sum_{x\in[-n,n]}\mathds{E}[\mathrm{deg}(x)]\leq (2n+1)\mu_\beta.
   $$
   Consequently, we obtain that
  \begin{equation}\label{up-V-2}
    \begin{aligned}
      \mathds{P}\left[\sum_{x\in[-n,n]}\mathrm{deg}(x)\geq \lambda r^{1/\delta}\right]
      &\leq \mathds{P}\left[\#\mathcal{E}_n\geq \lambda r^{1/\delta}/2\right]
      \leq \left(\frac{\e\widetilde{\mu}_n}{\lambda r^{1/\delta}/2}\right)^{\lambda r^{1/\delta}/2}\\
      &\leq (2\e(2n+1)\mu_\beta\lambda^{-1} r^{-1/\delta})^{\lambda r^{1/\delta}/2}\leq 2^{-\lambda r^{1/\delta}/2}\leq 2^{-\lambda/2},
    \end{aligned}
  \end{equation}
   where the second inequality  follows from Chernoff's inequality, since $\#\mathcal{E}_n$ is the sum of a series of independent Bernoulli variables.
   Additionally, from \cite[Corollary~1.2]{DFH25+} and the fact that $n=\lfloor \frac{1}{8\e\mu_\beta}\lambda r^{1/\delta}\rfloor $, there exist constants $c_0, q_0>0$ (both depending only on $\beta$) such that
  \begin{equation}\label{up-V-3}
    \mathds{P}\left[R(0,[-n,n]^c)\geq r\right]\leq \left(rn^{-\delta}\right)^{q_0}\leq c_0\lambda^{-q_0\delta},
  \end{equation}
  where $\delta\in (0,1)$ is the exponent defined in Lemma \ref{Lam-order}. By plugging \eqref{up-V-2} and \eqref{up-V-3} into \eqref{up-V-1}, we complete the proof.
\end{proof}

We now  turn to estimate the lower tail of $R(0,B_r(0)^c)$.

\begin{lemma}\label{low-R}
For all $\beta>0$, there exist constants $c_3, q_3>0$ {\rm(}both depending only on $\beta${\rm)} such that for any $r\geq 1$ and $\lambda\geq 1$,
  \begin{equation*}\label{eq-low-R}
    \mathds{P}\left[R(0,B_r(0)^c)\leq \lambda^{-1} r\right]\leq c_3\lambda^{-q_3}.
  \end{equation*}
\end{lemma}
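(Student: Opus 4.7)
The plan is to compare the resistance ball $B_r(0)$ with a Euclidean interval $[-m,m]$ of suitably chosen size $m$. The key monotonicity is that if $[-m,m]\subseteq B_r(0)$, then $B_r(0)^c\subseteq [-m,m]^c$, and so $R(0,B_r(0)^c)\geq R(0,[-m,m]^c)$. Setting $m:=\lceil (r\lambda^{-1/2})^{1/\delta}\rceil\vee 1$, one therefore writes
\begin{equation*}
\mathds{P}\!\left[R(0,B_r(0)^c)\leq \lambda^{-1}r\right]\leq \mathds{P}\!\left[[-m,m]\not\subseteq B_r(0)\right]+\mathds{P}\!\left[R(0,[-m,m]^c)\leq \lambda^{-1}r\right].
\end{equation*}

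For the first term, the event $\{[-m,m]\not\subseteq B_r(0)\}$ is exactly $\{\max_{x\in[-m,m]} R(0,x)\geq r\}$, and since restricting the edge set only increases effective resistance, $R(0,x)\leq R_{[-m,m]}(0,x)$. Using translation invariance together with a triangle-inequality step (to reduce to the case in Proposition~\ref{max-2r-moment}, where $0$ is the left endpoint rather than the interior of the interval), Markov's inequality and Lemma~\ref{Lam-order} give, for any integer $p$,
\begin{equation*}
\mathds{P}\!\left[\max_{x\in[-m,m]} R_{[-m,m]}(0,x)\geq r\right]\leq r^{-p}\,2^{p}C_{1,p}\,\Lambda(2m+1)^p\lesssim (m^\delta/r)^p\lesssim \lambda^{-p/2}.
\end{equation*}
For the second term, I would invoke the lower-tail estimate $\mathds{P}[R(0,[-m,m]^c)\leq s]\leq c(s/m^\delta)^{q_0}$ from \cite[Corollary~1.2]{DFH25+} (the same input that the authors used in the proof of Lemma~\ref{up-V}); substituting $s=\lambda^{-1}r$ and using $m^\delta\geq r\lambda^{-1/2}$ yields a bound of order $\lambda^{-q_0/2}$.

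Choosing $p\geq\lceil q_0\rceil$ and summing the two bounds gives the claimed inequality with $q_3=q_0/2$ and some finite $c_3$ depending only on $\beta$; the edge cases ($r\lambda^{-1/2}<1$ or $\lambda$ bounded) are absorbed into $c_3$ by trivial bounds. The main obstacle I expect is the balancing in the choice of $m$: one needs simultaneously $m^\delta\lesssim r$ (so that $[-m,m]$ is very likely contained in $B_r(0)$) and $m^\delta\gtrsim \lambda^{-1}r$ (so that $R(0,[-m,m]^c)$ is very likely above $\lambda^{-1}r$). Such an $m$ exists only because $\lambda>1$, and the geometric mean $m^\delta\asymp r\lambda^{-1/2}$ is the natural symmetric choice that makes both error probabilities of the same polynomial order.
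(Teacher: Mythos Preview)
Your argument is correct and essentially identical to the paper's: the same splitting over $\{[-m,m]\subseteq B_r(0)\}$ with the same choice $m^\delta\asymp r\lambda^{-1/2}$, the same use of Proposition~\ref{max-2r-moment} plus Markov for the first term, and the same appeal to \cite[Corollary~1.2]{DFH25+} for the second. The only cosmetic difference is that the paper fixes the moment order $p=2$ (getting a $\lambda^{-1}$ bound on the first term and hence $q_3=\min\{q_0/2,1\}$), whereas you allow general $p$ and take $q_3=q_0/2$.
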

\begin{proof}
For fixed $r\geq 1$ and $\lambda\geq 1$, denote $M=\lambda^{-1/(2\delta)}r^{1/\delta}$.
Note that if $[-M,M]\nsubseteq B_r(0)$, then by the definition of $B_r(0)$ in \eqref{def-RB} we have  $\max_{x\in[-M,M]}R(0,x)\geq r$. Thus we obtain that
  \begin{equation}\label{eq-low-r-1}
    \begin{aligned}
      &\mathds{P}\left[R(0,B_r(0)^c)\leq\lambda^{-1} r\right]\\
      &\leq \mathds{P}\big[[-M,M]\nsubseteq B_r(0)\big]
      +\mathds{P}\left[R(0,B_r(0)^c)\leq \lambda^{-1} r,\ [-M,M]\subseteq B_r(0)\right]\\
      &\leq \mathds{P}\left[\max_{x\in[-M,M]}R(0,x)\geq r\right]+\mathds{P}\left[R(0,B_r(0)^c)\leq \lambda^{-1} r,\  [-M,M]\subseteq B_r(0)\right]\\
      &\leq \mathds{P}\left[\max_{x\in[-M,M]}R(0,x)\geq r\right]+\mathds{P}\left[R(0,[-M,M]^c)\leq \lambda^{-1} r\right].
    \end{aligned}
  \end{equation}

  Next, we will estimate each term on the right-hand side of \eqref{eq-low-r-1}.
  For the first term,
  by Proposition~\ref{max-2r-moment}, there exist constants $\widetilde{C}_1, \widetilde{C}_2<\infty$ (both depending only on $\beta$) such that
  \begin{equation*}
  \begin{aligned}
    \mathds{E}\left[\max_{x\in[-M,M]}R(0,x)^2\right]&\leq \mathds{E}\left[\max_{x\in[0,M]}R_{[0,M]}(0,x)^2\right]+\mathds{E}\left[\max_{x\in[-M,0]}R_{[-M,0]}(0,x)^2\right]\\
    &\leq \widetilde{C}_1\Lambda(M)^2\leq \widetilde{C}_2 M^{2\delta}.
    \end{aligned}
  \end{equation*}
Combining this with Markov's inequality, we get that
  \begin{equation}\label{eq-low-r-2}
    \mathds{P}\left[\max_{x\in[-M,M]}R(0,x)\geq r\right]\leq \widetilde{C}_2r^{-2}M^{2\delta}.
  \end{equation}

For the second term on the right-hand side of \eqref{eq-low-r-1}, from \cite[Corollary~1.2]{DFH25+}, we can see that there exists a constant $q_0>0$ (depending only on $\beta$) such that
  \begin{equation}\label{eq-low-r-3}
    \mathds{P}\left[R(0,[-M,M]^c)\leq \lambda^{-1} r\right]\leq \left(\lambda^{-1}r/M^\delta\right)^{q_0}.
  \end{equation}
  Applying \eqref{eq-low-r-2} and \eqref{eq-low-r-3} to \eqref{eq-low-r-1}, along with the choice that $M= \lambda^{-1/(2\delta)}r^{1/\delta}$, we obtain that
  \begin{equation*}
    \mathds{P}\left[R(0,B_r(0)^c)\leq\lambda^{-1} r\right]\leq \widetilde{C}_2r^{-2}M^{2\delta}+\left(\lambda^{-1}r/M^\delta\right)^{q_0}\leq \widetilde{C}_2\lambda^{-1}+\lambda^{-q_0/2}.
  \end{equation*}
  Hence, we conclude with the desired result, setting $c_3=\widetilde{C}_2+1$ and $q_3=\min\{q_0/2,1\}$.
\end{proof}

With the above lemmas at hand, we can check some assumptions in  {\bf Assumption A} holds for the $\beta$-LRP model as follows.
\begin{proposition}\label{A1A3}
  For all $\beta>0$, assumptions {\rm(A1)} and {\rm(A3)} in {\bf Assumption A} hold for the $\beta$-LRP model with $\varphi(r)=r$ and $\phi(r)=r^{1/\delta}$,  where $\delta\in (0,1)$ is the exponent defined in Lemma \ref{Lam-order}.
\end{proposition}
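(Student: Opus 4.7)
The plan is a straightforward union bound: all four requirements in the definition of $J(\lambda)$ have already been (or will be) checked, so assembling them yields (A1) and (A3) simultaneously with polynomial decay.

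First, I observe that the last clause in the definition of $J(\lambda)$, namely $R(0,y)\leq \lambda \varphi(d(0,y))$ for every $y\in B_r(0)$, is automatic in our setting. Indeed, since we take $d(\cdot,\cdot) = R(\cdot,\cdot)$ and $\varphi(r) = r$, the inequality reduces to $R(0,y)\leq \lambda R(0,y)$, which holds trivially for every $\lambda\geq 1$ (this is Remark \ref{remark}(2)). So only three conditions remain to be verified:
\begin{equation*}
(\mathrm{i})\ \mathds{V}_r(0)\geq \lambda^{-1} r^{1/\delta},\qquad (\mathrm{ii})\ \mathds{V}_r(0)\leq \lambda r^{1/\delta},\qquad (\mathrm{iii})\ R(0,B_r(0)^c)\geq \lambda^{-1} r.
\end{equation*}

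Second, each of these three events has already been handled by the tail estimates proved earlier in this subsection: (i) is controlled by Lemma \ref{low-V}, giving $\mathds{P}[\mathds{V}_r(0)\leq \lambda^{-1}r^{1/\delta}]\leq C_2\lambda^{-q_1}$; (ii) is controlled by Lemma \ref{up-V}, giving $\mathds{P}[\mathds{V}_r(0)\geq \lambda r^{1/\delta}]\leq c_2\lambda^{-q_2}$; and (iii) is controlled by Lemma \ref{low-R}, giving $\mathds{P}[R(0,B_r(0)^c)\leq \lambda^{-1}r]\leq c_3\lambda^{-q_3}$, where $q_1,q_2,q_3>0$ depend only on $\beta$.

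Third, applying the union bound to the three bad events gives, for every $r\geq 1$ and $\lambda\geq 1$,
\begin{equation*}
\mathds{P}[r\notin J(\lambda)]\leq C_2\lambda^{-q_1} + c_2\lambda^{-q_2} + c_3\lambda^{-q_3}\leq C_{*,2}\lambda^{-q_0},
\end{equation*}
where $q_0 := \min\{q_1,q_2,q_3\}>0$ and $C_{*,2} := C_2 + c_2 + c_3 < \infty$ (using $\lambda\geq 1$ to absorb the different exponents). Setting $p(\lambda) := C_{*,2}\lambda^{-q_0}$ and $\lambda_0 := 1$, we see that $p(\lambda)\to 0$ as $\lambda\to\infty$, which is exactly (A1), while the polynomial form $p(\lambda) = C_{*,2}\lambda^{-q_0}$ is exactly (A3). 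Since the three lemmas are already proved, there is no real obstacle; the only care required is to make sure the identification $d = R$ and $\varphi(r) = r$ trivialises the last clause in the definition of $J(\lambda)$, so that no additional estimate on the resistance metric is needed.
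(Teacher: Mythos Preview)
Your proof is correct and follows essentially the same route as the paper: you observe that the choice $d=R$, $\varphi(r)=r$ trivialises the last condition in $J(\lambda)$, then apply a union bound using Lemmas \ref{low-V}, \ref{up-V} and \ref{low-R} to obtain the polynomial decay required for (A1) and (A3).
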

\begin{proof}
  For $\lambda>1$, recall the definition of $J(\lambda)$ in Definition \ref{def-Jlambda} and we take $d(\cdot,\cdot)=R(\cdot,\cdot)$ in the LRP model. Combining this with the fact that $\varphi(r)=r$, we find that the following condition in $J(\lambda)$ is trivial:
  \begin{equation*}
  R(0,y)\leq \lambda \varphi(d(0,y))\quad \text{for all $r\geq 1$ and all } y\in B_r(0).
  \end{equation*}
 As a result, we have
  \begin{equation*}
    \mathds{P}\left[r\in J(\lambda)\right]\geq 1-\left(\mathds{P}\left[\mathds{V}_r(0)>\lambda r^{1/\delta}\right]+\mathds{P}\left[\mathds{V}_r(0)<\lambda^{-1}r^{1/\delta}\right]+\mathds{P}\left[R(0,B_r(0)^c)<\lambda^{-1}r\right]\right).
  \end{equation*}
  Combining this with Lemmas~\ref{low-V}, \ref{up-V} and \ref{low-R} yields the desired result.
\end{proof}

With the help of Propositions~\ref{lem-td}, \ref{thm-quenched} and Remark~\ref{remark}, we can now present the proof of  Theorem~\ref{thm-mr}.

\begin{proof}[Proof of Theorem~\ref{thm-mr}]
  Recall that $\phi(r)=r^{1/\delta}$ and $\varphi(r)=r$. Therefore,
   $$
   \Psi(r):=(\phi\cdot\varphi)^{-1}(r)=r^{\delta/(1+\delta)}\quad \text{and}\quad \phi(\Psi(r))=r^{1/(1+\delta)}.
    $$
    From Proposition~\ref{A1A3}, we see that (A1) and (A3) in {\bf Assumption A} hold and \eqref{cond-phi2} holds for $\alpha=1, D=\delta^{-1}, C_3=C_4=1$ and any $m_1,m_2>0$. Consequently, from Proposition~\ref{thm-quenched}, we conclude that Theorem \ref{thm-mr} (1) holds and for each $x\in \mathds{Z}$, $\mathds{P}$-a.s.
  $$
  d^{(q)}_s(\beta)=\frac{2D}{D+\alpha}=\frac{2}{1+\delta}.
  $$

  Next, we turn to the annealed spectral dimension.
  As mentioned in Remark~\ref{remark} (2), we have that $J(\lambda)=\widehat{J}(\lambda)$ in our setting. Thus, it follows from Proposition~\ref{A1A3} that (A1) also holds with respect to $\widehat{J}(\lambda)$.
  Combining this with Corollary~\ref{low-V-2} and using Remark~\ref{remark} (1), we obtain that both \eqref{an-lowb} and \eqref{an-upb} in Proposition \ref{lem-td} hold. This implies that there exist constants $0<\widetilde{c}_1<\widetilde{c}_2<\infty$ (depending only on $\beta$) such that for any $n\in \mathds{N}$,
  \begin{equation*}
    \frac{\widetilde{c}_1}{n^{1/(1+\delta)}}=\frac{\widetilde{c}_1}{\phi(\Psi(n))}\leq \mathds{E}\left[p_{2n}(0,0)\right]\leq \frac{\widetilde{c}_2}{\phi(\Psi(n))}=\frac{\widetilde{c}_2}{n^{1/(1+\delta)}}.
  \end{equation*}
  As a result, the annealed spectral dimension of the $\beta$-LRP model is given by
  $$d_s^{(a)}(\beta)=-2\lim_{n\to\infty}\frac{\log\mathds{E}[p_{2n} (0,0)]}{\log n}=\frac{2}{1+\delta}.$$
  Thus, we complete the proof.
\end{proof}

\bigskip

\noindent{\bf Acknowledgement.} \rm
We warmly thank Jian Ding and Takashi Kumagai for helpful discussions.
L.-J.\ Huang is partially supported by National Key R\&D Program of China No.\ 2023YFA1010400, and the National Natural Science Foundation of China No.\ 12471136.

\bibliographystyle{plain}
\bibliography{resistance-ref2}

\begin{thebibliography}{10}

\bibitem{Bar04}
M.T. Barlow.
\newblock Random walks on supercritical percolation clusters.
\newblock {\em Ann. Probab.}, 32(4):3024--3084, 2004.

\bibitem{BJKS08}
M.T. Barlow, A.A. J\'{a}rai, T.~Kumagai, and G.~Slade.
\newblock Random walk on the incipient infinite cluster for oriented
  percolation in high dimensions.
\newblock {\em Comm. Math. Phys.}, 278(2):385--431, 2008.

\bibitem{B02}
N.~Berger.
\newblock Transience, recurrence and critical behavior for long-range
  percolation.
\newblock {\em Comm. Math. Phys.}, 226(3):531--558, 2002.

\bibitem{BB07}
N.~Berger and M.~Biskup.
\newblock Quenched invariance principle for simple random walk on percolation
  clusters.
\newblock {\em Probab. Theory Related Fields}, 137(1-2):83--120, 2007.

\bibitem{BT24}
N.~Berger and Y.~Tokushige.
\newblock Scaling limits for random walks on long range percolation clusters.
\newblock {\em arXiv:2403.18532}, 2024.

\bibitem{BCKW21}
M.~Biskup, X.~Chen, T.~Kumagai, and J.~Wang.
\newblock Quenched invariance principle for a class of random conductance
  models with long-range jumps.
\newblock {\em Probab. Theory Related Fields}, 180(3-4):847--889, 2021.

\bibitem{CCK22}
V.H. Can, D.A. Croydon, and T.~Kumagai.
\newblock Spectral dimension of simple random walk on a long-range percolation
  cluster.
\newblock {\em Electron. J. Probab.}, 27(56):1--37, 2022.

\bibitem{CS12}
N.~Crawford and A.~Sly.
\newblock {Simple random walk on long range percolation clusters I: heat kernel
  bounds}.
\newblock {\em Probab. Theory Related Fields}, 154(3):753--786, 2012.

\bibitem{GKZ93}
G.R. Grimmett, H.~Kesten, and Y.~Zhang.
\newblock Random walk on the infinite cluster of the percolation model.
\newblock {\em Probab. Theory and Related Fields}, 96:33--44, 1993.

\bibitem{DFH25+}
{J. Ding and Z. Fan and L.-J. Huang}.
\newblock The polynomial growth of effective resistances in one-dimensional
  critical long-range percolation.
\newblock {\em arXiv:2504.21378}, 2025.

\bibitem{Ki01}
J.~Kigami.
\newblock {\em {Analysis on Fractals}}.
\newblock Cambridge University Press, Cambridge, 2001.

\bibitem{KN09}
G.~Kozma and A.~Nachmias.
\newblock The {A}lexander-{O}rbach conjecture holds in high dimensions.
\newblock {\em Invent. Math.}, 178(3):635--654, 2009.

\bibitem{Ku14}
T.~Kumagai.
\newblock {\em Random walks on disordered media and their scaling limits}.
\newblock Lecture Notes in Mathematics, vol. 2101, Springer, Cham, 2014,
  Lecture notes from the 40th Probability Summer School held in Saint-Flour,
  2010, \'{E}cole d'\'{E}t\'{e} de Probabilit\'{e}s de Saint-Flour. [Saint-
  Flour Probability Summer School].

\bibitem{KM08}
T.~Kumagai and J.~Misumi.
\newblock Heat kernel estimates for strongly recurrent random walk on random
  media.
\newblock {\em J. Theoret. Probab.}, 21:910--935, 2008.

\bibitem{MFGW89}
A.~De Masi, P.A. Ferrari, S.~Goldstein, and W.D. Wick.
\newblock An invariance principle for reversible markov processes.
  {A}pplications to random motions in random environments.
\newblock {\em J. Statist. Phys.}, 55(3-4):787--855, 1989.

\bibitem{MP07}
P.~Mathieu and A.~Piatnitski.
\newblock Quenched invariance principles for random walks on percolation
  clusters.
\newblock {\em Proc. R. Soc. Lond. Ser. A Math. Phys. Eng. Sci.},
  463(2085):2287--2307, 2007.

\bibitem{MR04}
P.~Mathieu and E.~R\'{e}my.
\newblock Isoperimetry and heat kernel decay on percolation clusters.
\newblock {\em Ann. Probab.}, 32:100--128, 2004.

\bibitem{S83}
L.S. Schulman.
\newblock Long-range percolation in one dimension.
\newblock {\em J. Phys. A: Math. Gen.}, 16(17):L639--L641, 1983.

\bibitem{SS04}
V.~Sidoravicius and A.-S. Sznitman.
\newblock Quenched invariance principles for walks on clusters of percolation
  or among random conductances.
\newblock {\em Probab. Theory Related Fields}, 129(2):219--244, 2004.

\bibitem{ZPL83}
Z.Q. Zhang, F.C. Pu, and B.Z. Li.
\newblock Long-range percolation in one dimension.
\newblock {\em J. Phys. A: Math. Gen.}, 16(3):L85, 1983.

\end{thebibliography}

\end{document}